\begin{document}
 \bibliographystyle{plain}

 \newtheorem{theorem}{Theorem}
 \newtheorem{lemma}{Lemma}
 \newtheorem{corollary}{Corollary}
 \newtheorem{conjecture}{Conjecture}
 \newtheorem{definition}{Definition}
 \newcommand{\mc}{\mathcal}
 \newcommand{\rar}{\rightarrow}
 \newcommand{\Rar}{\Rightarrow}
 \newcommand{\lar}{\leftarrow}
 \newcommand{\lrar}{\leftrightarrow}
 \newcommand{\Lrar}{\Leftrightarrow}
 \newcommand{\zpz}{\mathbb{Z}/p\mathbb{Z}}
 \newcommand{\mbb}{\mathbb}
 \newcommand{\A}{\mc{A}}
 \newcommand{\B}{\mc{B}}
 \newcommand{\cc}{\mc{C}}
 \newcommand{\D}{\mc{D}}
 \newcommand{\E}{\mc{E}}
 \newcommand{\F}{\mc{F}}
 \newcommand{\FN}{\F_n}
 \newcommand{\I}{\mc{I}}
 \newcommand{\J}{\mc{J}}
 \newcommand{\M}{\mc{M}}
 \newcommand{\nn}{\mc{N}}
 \newcommand{\qq}{\mc{Q}}
 \newcommand{\U}{\mc{U}}
 \newcommand{\X}{\mc{X}}
 \newcommand{\Y}{\mc{Y}}
 \newcommand{\C}{\mathbb{C}}
 \newcommand{\R}{\mathbb{R}}
 \newcommand{\N}{\mathbb{N}}
 \newcommand{\Q}{\mathbb{Q}}
 \newcommand{\Z}{\mathbb{Z}}
 \newcommand{\ff}{\mathfrak F}
 \newcommand{\fb}{f_{\beta}}
 \newcommand{\fg}{f_{\gamma}}
 \newcommand{\gb}{g_{\beta}}
 \newcommand{\vphi}{\varphi}
 \newcommand{\whXq}{\widehat{X}_q(0)}
 \newcommand{\Xnn}{g_{n,N}}
 \newcommand{\lf}{\left\lfloor}
 \newcommand{\rf}{\right\rfloor}
 \newcommand{\lQx}{L_Q(x)}
 \newcommand{\lQQ}{\frac{\lQx}{Q}}
 \newcommand{\rQx}{R_Q(x)}
 \newcommand{\rQQ}{\frac{\rQx}{Q}}
 \newcommand{\elQ}{\ell_Q(\alpha )}
 \newcommand{\oa}{\overline{a}}
 \newcommand{\oI}{\overline{I}}
 \newcommand{\dx}{\text{\rm d}x}
 \newcommand{\dy}{\text{\rm d}y}

\title[Martingales and Continued Fractions]{Martingale Differences and the
Metric\\
Theory of Continued Fractions}
\author{Alan~K.~Haynes and Jeffrey~D.~Vaaler}
\subjclass[2000]{11B57, 11K50, 60G46}
\keywords{Farey fractions, continued fractions, martingales}
\address{Department of Mathematics, University of Texas, Austin, Texas
78712 USA}
\email{ahaynes@math.utexas.edu}
\email{vaaler@math.utexas.edu}
 \allowdisplaybreaks


\begin{abstract}
We investigate a collection of orthonormal functions that encodes
information about the continued fraction expansion of real numbers.
When suitably ordered these functions form a complete system of martingale
differences and
are a special case of a class of martingale differences considered
by R.~F.~Gundy.  By applying known results for martingales we
obtain corresponding metric theorems for the continued fraction
expansion of almost all real numbers.
\end{abstract}


\maketitle

\section{Introduction}  Throughout this paper we work with real valued
functions defined on the
compact group $\R/\Z$.  As usual we regard such functions as defined on
$\R$ and
having period $1$.  We frequently regard $\R/\Z$ as a probability space
with respect to a normalized
Haar measure defined on the $\sigma$-algebra of Borel subsets, or
restricted to a
finite sub-$\sigma$-algebra.  We also work with elements of the torsion
subgroup $\Q/\Z$.  If
$\beta$ is a point in $\Q/\Z$ we write $\beta = a/q$ where $q$ is a
positive integer
and $a$ is an integer representing a unique reduced residue class modulo
$q$.  By the {\it height}
of $\beta$ we understand the positive integer $h(\beta) = q$, which is
also the order of $\beta$ in $\Q/\Z$.
In Section 2 we define a countable collection of functions $\fb:\R/\Z\rar
\R$, indexed by points $\beta$
in $\Q/\Z$.  These functions form a complete orthonormal basis for the
Hilbert space $L^2(\R/\Z)$
and also encode information about continued fractions.  The functions
$\fb$, which are the subject
of this paper, were used by Hata \cite{hata1995} in a slightly different
form to obtain interesting identities
for sums over Farey fractions.  In particular, our Theorem
\ref{comporththm} is similar to
\cite[Lemma 3.1]{hata1995}.  We will show that for certain natural
orderings the functions $\fb$, together
with a corresponding sequence of finite $\sigma$-algebras, form a sequence
of martingale differences.  And we
will show that the value of $\fb(\alpha)$ is determined in an elementary
way by the convergents
and intermediate convergents from the continued fraction expansion of
$\alpha$.

Recall that each irrational real number $\alpha$ has an infinite
simple continued fraction expansion
\begin{align*}
\alpha = a_0 + \cfrac{1}{a_1+
            \cfrac{1}{a_2+
             \cfrac{1}{a_3+\dotsb}}}=[a_0; a_1, a_2, a_3, \dots ],
\end{align*}
where $a_0$ is an integer and $a_1, a_2, \dots $ is a sequence of positive
integers uniquely determined by
$\alpha$.  Here we adopt standard notation and terminology as developed in
\cite{khin1964}, \cite{rockett1992},
or \cite{schmidt1980}.  The number $a_n$ is the $n$th partial quotient of
$\alpha$.  If $\alpha$ is rational we write
\begin{equation*}
\alpha = [a_0; a_1, a_2, \dots , a_N]
\end{equation*}
for one of its two finite continued fraction expansions.  The principal
convergents from the continued fraction expansion of an
irrational real number $\alpha$ are defined by setting $p_{-2} = 0$,
$q_{-2} = 1$, $p_{-1} = 1$, $q_{-1}=0$, and then by the
recursive formulas
\begin{equation}
p_n = a_np_{n-1} + p_{n-2}\quad\text{and}\quad q_n = a_nq_{n-1} +
q_{n-2}\label{cfraceq1}
\end{equation}
for $n = 0, 1, 2, \dots $.  Of course $a_n = a_n(\alpha)$, $p_n =
p_n(\alpha)$, and $q_n = q_n(\alpha)$ depend
on $\alpha$, but to simplify notation we often suppress this dependence.

If $\alpha$ is an irrational point in $\R/\Z$, that is, $\alpha$ does not
belong to $\Q/\Z$, then the partial quotient
$a_0$ is not uniquely determined.  For our purposes it will be convenient
to set $a_0 = 0$ and so to identify $\alpha$ in
$\R/\Z$ with its coset representative in the open interval $(0,1)$.  Then
we also have $p_0 = 0$ and $q_0 = 1$.  We will
make use of the convergents and the intermediate convergents from the
continued fraction expansion of $\alpha$.
It will be convenient to organize these by defining
\begin{equation}
E_n = \left\{\frac{mp_{n-1} + p_{n-2}}{mq_{n-1} + q_{n-2}}: m = 1, 2,
\dots , a_n\right\}\label{cfraceq10}
\end{equation}
for $n = 1, 2, \dots $.
Each set $E_n$ contains $a_n$ distinct fractions, including the principal
convergent $p_n/q_n$.  The remaining fractions
(if any) indexed by $m = 1, 2, \dots , a_n-1$ are the intermediate
convergents to $\alpha$.  It is easy to check that
\begin{equation}
E_n = \big\{[0; a_1, a_2, \dots , a_{n-1}, m]: m = 1, 2, \dots,
a_n\big\}.\label{cfraceq18}
\end{equation}
Again the set $E_n = E_n(\alpha)$ depends on $\alpha$, but we often
suppress this dependence.

For each point $\beta$ in $\Q/\Z$ the function $\fb:\R/\Z\rar\R$ is a step
function taking at most three distinct values
on intervals of positive measure, and defined below by (\ref{fbdef1}) or
(\ref{fbdef2}).
If $\alpha$ is an irrational point in $\R/\Z$ then the value of
$\fb(\alpha)$ is also
determined by the convergents and intermediate convergents to $\alpha$.
The precise result is as follows.

\begin{theorem}\label{fbetacontfr}
Let $\alpha$ be an irrational point in $\R/\Z$, and for $n = 1, 2, \dots $
let $E_n$ be the collection of convergents and
intermediate convergents defined by {\rm (\ref{cfraceq10})}.  If $\beta\in\Q/\Z$
then $f_{\beta}(\alpha) \not= 0$ if and
only if $\beta$ belongs to $E_n$ for some $n = 1, 2, \dots $.  Moreover,
if $\beta$ belongs to $E_n$ then
\begin{equation}
f_{\beta}(\alpha) = (-1)^{n-1} q_{n-1}(\alpha).\label{fbcfprop1}
\end{equation}
\end{theorem}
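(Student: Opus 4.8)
The plan is to read off from the step-function definition (\ref{fbdef1})/(\ref{fbdef2}) the intervals of positive measure on which $\fb$ is constant, and then to match both the \emph{support} of $\fb$ and its constant values against the continued fraction data. Since the statement concerns only irrational $\alpha$, I may discard the measure-zero set of rationals, so it suffices to determine $\fb$ on finitely many open intervals. Fix $\beta=a/q$ with $h(\beta)=q$. Because $\fb$ takes at most three distinct values on sets of positive measure, its nonzero part is carried by at most two open intervals, and I would first show from the definition that these are precisely $(L,\beta)$ and $(\beta,R)$, where $L<\beta<R$ are the two Farey ``parents'' of $\beta$, i.e. the fractions $p'/q'$ and $p''/q''$ having $\beta$ as their mediant, with $q'+q''=q$. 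The value of $\fb$ read from the definition on these two pieces should come out to be $\pm q'$ and $\mp q''$.

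Next I would characterise, purely in terms of continued fractions, the set of irrational $\alpha$ for which $\beta\in E_n(\alpha)$ for some $n$. Using the description (\ref{cfraceq18}), $\beta\in E_n(\alpha)$ means $\beta=[0;a_1,\dots,a_{n-1},m]$ with $1\le m\le a_n(\alpha)$ and $a_i(\alpha)=a_i$ for $i<n$; here $p_{n-1}/q_{n-1}$ is simultaneously a convergent of $\alpha$ and of $\beta$. Holding the first $n-1$ partial quotients fixed and letting the real parameter $t$ increase from $m$ to $\infty$, the point $[0;a_1,\dots,a_{n-1},t]=\frac{tp_{n-1}+p_{n-2}}{tq_{n-1}+q_{n-2}}$ moves monotonically from $\beta$ to $p_{n-1}/q_{n-1}$, so the admissible $\alpha$ fill exactly the open interval with endpoints $\beta$ and $p_{n-1}/q_{n-1}$. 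A rational $\beta$ has two finite expansions, of opposite parity in length, producing two \emph{different} predecessors $p_{n-1}/q_{n-1}$ lying on opposite sides of $\beta$; via the determinant identity $p_{n-1}q_{n-2}-p_{n-2}q_{n-1}=(-1)^{n}$ their denominators are precisely the Farey-parent denominators $q'$ and $q''$, with $q'+q''=q$. This identifies the union of the two admissible intervals with $\supp\fb=(L,\beta)\cup(\beta,R)$, yielding the ``if and only if''.

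It remains to match values. On the interval between $\beta$ and its predecessor $p_{n-1}/q_{n-1}$ the theorem predicts the constant $(-1)^{n-1}q_{n-1}$, and I would check this agrees with the value $\pm q'$ (resp. $\mp q''$) obtained in the first step. I would pin the constants down by the mean-zero and $L^2$ normalizations: with lengths $\beta-L=1/(qq')$ and $R-\beta=1/(qq'')$ one gets $q'\cdot\tfrac1{qq'}=q''\cdot\tfrac1{qq''}=\tfrac1q$ (mean zero after the sign flip) and $\tfrac{q'}{q}+\tfrac{q''}{q}=1$ (unit norm), which determines the two constants uniquely and reproduces $(-1)^{n-1}q_{n-1}$.

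The main obstacle I anticipate is the bookkeeping for the two continued fraction expansions of the rational $\beta$: the level $n$ changes by one between the two expansions, so the sign $(-1)^{n-1}$ flips while the predecessor denominator switches $q_{n-1}\lrar q-q_{n-1}$, and both effects must be absorbed into the single formula (\ref{fbcfprop1}). Keeping the orientation (left versus right of $\beta$) consistent with the parity of $n$, and confirming that the resulting two constants match those dictated by the definition of $\fb$, is where the argument is most delicate.
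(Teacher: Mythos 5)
Your argument is correct and is essentially the paper's own proof: the key claim---that the two Farey parents $\beta'$, $\beta''$ of $\beta$ are exactly the predecessors $p_{n-1}/q_{n-1}$ arising from the two finite continued fraction expansions of $\beta$, lying on opposite sides of $\beta$, and that the admissible irrational $\alpha$ fill $I(\beta',\beta)$ or $I(\beta,\beta'')$ according to the parity of $n$---is precisely the paper's Lemma \ref{farey0}, which it proves from the standard inequalities (\ref{cfraceq11})--(\ref{cfraceq17}) and Farey consecutiveness rather than your monotone parametrization $t\mapsto[0;a_1,\dots,a_{n-1},t]$; the two routes are interchangeable. One small quibble: mean-zero plus unit $L^2$-norm determines the pair of constants only up to a common sign, so the sign still has to be read off from (\ref{fbdef1}); but since the paper supplies the explicit values in (\ref{fbdef2}), that normalization detour is unnecessary anyway.
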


\noindent Further identities involving partial sums of the functions $\fb$
are given in Section 3.

By combining Theorem \ref{fbetacontfr} with a convergence theorem for
martingale differences due to
 R.~F.~Gundy \cite[Theorem 2.1(a)]{gundy1966}, we obtain the following
metric theorem.

\begin{theorem}\label{cfserrep2}
Let $F:\R/\Z\rar \R\cup\{\pm\infty\}$ be a Borel measurable function that
is finite almost everywhere.  Then there
exist real numbers $\{c(\beta):\beta\in\Q/\Z\}$ such that
\begin{equation}
\lim_{N\rar\infty} \sum_{n=1}^N (-1)^{n-1} q_{n-1}(\alpha) \sum_{\beta\in
E_n(\alpha)} c(\beta) = F(\alpha)\label{sum3}
\end{equation}
for almost all irrational points $\alpha$ in $\R/\Z$.
\end{theorem}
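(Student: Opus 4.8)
The plan is to recognize the double sum in (\ref{sum3}) as the sequence of partial sums of a martingale built from the functions $\fb$, and then to invoke Gundy's theorem to produce coefficients realizing $F$ as an almost everywhere limit. The first step is a rewriting. Fix any family of real numbers $\{c(\beta):\beta\in\Q/\Z\}$ and let $\alpha$ be irrational. By Theorem \ref{fbetacontfr} the only $\beta$ with $\fb(\alpha)\neq 0$ are those lying in some block $E_n(\alpha)$, and for such $\beta$ one has $\fb(\alpha)=(-1)^{n-1}q_{n-1}(\alpha)$. Since the blocks $E_n(\alpha)$ are pairwise disjoint, grouping the nonzero terms of $\sum_\beta c(\beta)\fb(\alpha)$ accordingly gives
\[
\sum_{\beta\in\bigcup_{n=1}^N E_n(\alpha)} c(\beta)\,\fb(\alpha)=\sum_{n=1}^N(-1)^{n-1}q_{n-1}(\alpha)\sum_{\beta\in E_n(\alpha)}c(\beta),
\]
which is exactly the $N$th partial sum on the left of (\ref{sum3}). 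Thus it suffices to choose the $c(\beta)$ so that these partial sums converge almost everywhere to $F$.

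Second, I would install the martingale framework developed in the preceding sections. In the natural ordering the functions $\fb$, together with an associated increasing sequence of finite $\sigma$-algebras $\{\mc{F}_N\}$, form a complete system of martingale differences, and the blocks indexed by $E_n$ are the successive martingale increments: the function $x\mapsto\sum_{\beta\in E_n(x)}c(\beta)\fb(x)$ is $\mc{F}_n$-measurable with conditional expectation zero given $\mc{F}_{n-1}$, for every choice of the $c(\beta)$. Completeness of the orthonormal system guarantees that $\bigvee_N\mc{F}_N$ is the full Borel $\sigma$-algebra of $\R/\Z$ up to null sets, so that every Borel measurable $F$ is measurable with respect to the terminal $\sigma$-algebra. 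The partial sums from the previous paragraph are then precisely the martingale generated by this difference sequence, with the free parameters $c(\beta)$ ranging over the admissible martingale transforms of the complete system.

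With these identifications in place, the theorem follows from Gundy's convergence theorem \cite[Theorem 2.1(a)]{gundy1966}: applied to the complete martingale difference system $\{\fb\}$ and to a measurable function $F$ that is finite almost everywhere, it furnishes scalars $c(\beta)$ for which the associated martingale converges almost everywhere to $F$. Since the rationals and the null set on which convergence may fail are both of measure zero, this yields (\ref{sum3}) for almost all irrational $\alpha$.

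The step I expect to be the main obstacle is the precise matching in the second paragraph: one must verify that the grouping of the $\fb$ into the $\alpha$-dependent blocks $E_n(\alpha)$ coincides with the fixed block structure of the martingale difference sequence, so that the order in which Gundy's theorem accumulates the differences agrees with the order imposed by (\ref{sum3}). This hinges on the fact that each $\beta\in\Q/\Z$ enters $E_n(x)$ only at the single level $n$ determined by the length of its continued fraction expansion, so that assigning one coefficient $c(\beta)$ per point $\beta$ is equivalent to choosing, level by level, an arbitrary element of the $n$th martingale increment space. Establishing this equivalence---and thereby that Gundy's construction stays within the span of the $\fb$---is the crux; once it is secured, the remainder is the bookkeeping of the first and third paragraphs.
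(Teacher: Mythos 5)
Your overall strategy is the paper's: apply Gundy's theorem to the complete martingale-difference system $\{\fb\}$ in an admissible enumeration to obtain coefficients with $\sum_\beta c(\beta)\fb\to F$ almost everywhere, then use Theorem \ref{fbetacontfr} to regroup the nonzero terms. But the step you yourself flag as the crux is left genuinely unresolved, and the route you sketch for resolving it would not work. The blocks $E_n(\alpha)$ are not a fixed block structure compatible with the martingale: which level $n$ a given $\beta$ occupies (and whether it occurs at all) depends on $\alpha$. Indeed, by (\ref{cfraceq18}) a point $\beta$ with finite expansion of length $k$ can lie in $E_k(\alpha)$ or in $E_{k+1}(\alpha)$ according to the partial quotients of $\alpha$, corresponding to its two finite continued fraction expansions; so the assertion that each $\beta$ enters at a single level determined by $\beta$ alone is false, and there is no identification of ``one coefficient per $\beta$'' with ``an arbitrary element of the $n$th increment space.'' Likewise the claim that $x\mapsto\sum_{\beta\in E_n(x)}c(\beta)\fb(x)$ is a martingale difference for a filtration indexed by the continued-fraction level $n$ is neither proved nor needed; the martingale the paper actually uses is indexed by the Farey order (height $\le Q$), not by $n$.

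The missing step is resolved in the paper by a purely pointwise subsequence observation. Order $\Q/\Z$ by height as in (\ref{farey}); Gundy's theorem, in the form of Theorem \ref{cfserrep0}, gives $\lim_{Q\to\infty}\sum_{h(\beta)\le Q}c(\beta)\fb(x)=F(x)$ for almost all $x$. Fix an irrational $\alpha$ in the convergence set. For each $Q$, Theorem \ref{fbetacontfr} turns $\sum_{h(\beta)\le Q}c(\beta)\fb(\alpha)$ into the sum over the complete blocks $E_1(\alpha),\dots,E_{N-1}(\alpha)$ plus a \emph{partial} block at level $N$, with $M,N$ determined by (\ref{cfraceq15}). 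Restricting $Q$ to the $\alpha$-dependent subsequence $Q=q_N(\alpha)$ forces $M=a_N$, so the last block is complete and the expression is exactly the $N$th partial sum in (\ref{sum3}); since any subsequence of a convergent sequence of reals converges to the same limit, (\ref{sum3}) follows. Your first and third paragraphs are correct as they stand; supplying this subsequence argument is what is missing from the second and fourth.
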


\noindent The numbers $\{c(\beta):\beta\in\Q/\Z\}$ that occur in the
statement of Theorem \ref{cfserrep2} are not
uniquely determined by the measurable function $F$.  In particular, there
exist $c(\beta)$ that are not all zero but for
which the limit (\ref{sum3}) is zero for almost all $\alpha$.  We give an
example of this in Section 4.  An interesting
feature of Theorem \ref{cfserrep2} is that no assumption is made
concerning the integrability of $F$.
If we assume that the function $F$ is in $L^1(\R/\Z)$ then the conclusion
(\ref{sum3}) is much easier to prove.  In fact,
if $F$ is in $L^1(\R/\Z)$ then there is a unique choice of the numbers
$\{c(\beta):\beta\in\Q/\Z\}$ such that (\ref{sum3})
converges almost everywhere {\it and} in $L^1$-norm to the function $F$.
If $F$ is in $L^2(\R/\Z)$ then there is a unique
choice of the numbers $\{c(\beta):\beta\in\Q/\Z\}$ such that (\ref{sum3})
converges almost everywhere {\it and}
\begin{equation*}
\sum_{\beta\in\Q/\Z} c(\beta)^2 < \infty.
\end{equation*}
This follows from the fact (see Theorem \ref{comporththm}) that the
collection of functions $\{f_{\beta}:\beta\in\Q/\Z\}$
forms a complete orthonormal basis for $L^2(\R/\Z)$.

In Section 5 we assume that numbers $\{c(\beta):\beta\in\Q/\Z\}$ are given
and we consider the behavior of the corresponding
partial sums, such as occur on the left of (\ref{sum3}).  If the partial
sums are bounded in $L^1$-norm then it is an
immediate consequence of the martingale convergence theorem that the
partial sums converge almost everywhere.  We
report this as Theorem \ref{martthm1}.  If we assume that the map
$\beta\mapsto c(\beta)h(\beta)$ is bounded on $\Q/\Z$,
then we can draw further conclusions about the set of irrational points
$\alpha$ where the partial sums converge.

\begin{theorem}\label{martthm0}  Let $\{c(\beta):\beta\in\Q/\Z\}$ be a
collection of real numbers such that
$\beta\mapsto c(\beta)h(\beta)$ is bounded on $\Q/\Z$.  For each
irrational point $\alpha$ in $\R/\Z$ and positive integer
$Q$ let $M = M(\alpha, Q)$ and $N = N(\alpha, Q)$ be the unique positive
integers such that
\begin{equation*}
1 \leq M \leq a_N\quad\text{and}\quad Mq_{N-1} + q_{N-2} \leq Q <
(M+1)q_{N-1} + q_{N-2}.
\end{equation*}
Write $\cc$ for the subset of irrational points $\alpha$ in $\R/\Z$
such that
\begin{equation}
\lim_{Q\rar\infty} ~\sum_{n=1}^N (-1)^{n-1} q_{n-1}(\alpha)
	\sum_{\substack{\beta\in E_n(\alpha)\\h(\beta)\le Q}} c(\beta)\label{summ1}
\end{equation}
exists and is finite.  Write $\D$ for the subset of irrational points
$\alpha$ in $\R/\Z$ such that both
\begin{equation}
\liminf_{Q\rar\infty} ~\sum_{n=1}^N (-1)^{n-1} q_{n-1}(\alpha)
	\sum_{\substack{\beta\in E_n(\alpha)\\h(\beta)\le Q}} c(\beta) =
-\infty\label{summ2}
\end{equation}
and
\begin{equation}
\limsup_{Q\rar\infty} ~\sum_{n=1}^N (-1)^{n-1} q_{n-1}(\alpha)
	\sum_{\substack{\beta\in E_n(\alpha)\\h(\beta)\le Q}} c(\beta) =
+\infty.\label{summ3}
\end{equation}
Write $\E$ for the subset of irrational points $\alpha$ in $\R/\Z$ such that
\begin{equation*}
\sum_{n=1}^{\infty} q_{n-1}(\alpha)^2 \sum_{\beta\in E_n(\alpha)}
c(\beta)^2 < \infty.
\end{equation*}
Then we have
\begin{equation}
{\rm (i)}\ |\cc\cup\D| = 1,\quad {\rm (ii)}\ |\cc\setminus\E| = 0,
	\quad\text{and}\quad {\rm (iii)}\ |\E\setminus\cc| = 0.\label{sets0}
\end{equation}
\end{theorem}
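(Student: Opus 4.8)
The plan is to recognize the partial sum inside (\ref{summ1}) as a martingale with uniformly bounded increments and then to invoke the classical limit theory for such martingales. First I would use Theorem \ref{fbetacontfr} to rewrite the inner sum. Since $\fb(\alpha) = (-1)^{n-1} q_{n-1}(\alpha)$ precisely when $\beta\in E_n(\alpha)$ and vanishes otherwise, the definitions of $M = M(\alpha,Q)$ and $N = N(\alpha,Q)$ show that the fractions $\beta$ with $h(\beta)\le Q$ contributing to (\ref{summ1}) are exactly those in $\bigcup_n E_n(\alpha)$ of height at most $Q$. Hence the expression in (\ref{summ1}) equals
\begin{equation*}
S_Q(\alpha) = \sum_{\substack{\beta\in\Q/\Z\\h(\beta)\le Q}} c(\beta)\fb(\alpha),
\end{equation*}
so that $\cc$ is the set where $\lim_{Q\rar\infty} S_Q(\alpha)$ exists and is finite, while $\D$ is the set where $S_Q(\alpha)$ oscillates with $\liminf = -\infty$ and $\limsup = +\infty$.

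Next I would record the martingale structure. Ordering the $\fb$ by height, together with the associated finite $\sigma$-algebras from the earlier sections, exhibits $\{c(\beta)\fb\}$ as a sequence of martingale differences, with $\{S_Q\}$ the corresponding martingale. For each fixed irrational $\alpha$ the heights occurring in $\bigcup_n E_n(\alpha)$ are distinct, so as $Q$ increases by one the value $S_Q(\alpha)$ changes by at most the single term $c(\beta)\fb(\alpha)$. The hypothesis that $\beta\mapsto c(\beta)h(\beta)$ is bounded then forces uniformly bounded increments: writing $\beta = (mp_{n-1}+p_{n-2})/(mq_{n-1}+q_{n-2})\in E_n(\alpha)$ we have $h(\beta) = mq_{n-1}+q_{n-2}\ge q_{n-1}$, whence
\begin{equation*}
|c(\beta)\fb(\alpha)| = |c(\beta)|\,q_{n-1}(\alpha) \le |c(\beta)|\,h(\beta) \le B,
\end{equation*}
where $B = \sup_{\beta}|c(\beta)h(\beta)|$ is finite. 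Thus the increments of $\{S_Q\}$ are bounded by $B$ in $L^\infty$-norm.

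I would then identify the quadratic variation with the series defining $\E$. Since for each fixed $\alpha$ each jump $S_Q(\alpha)-S_{Q-1}(\alpha)$ is a single term $c(\beta)\fb(\alpha)$, Theorem \ref{fbetacontfr} gives
\begin{equation*}
\sum_Q \big(S_Q(\alpha)-S_{Q-1}(\alpha)\big)^2 = \sum_{\beta} c(\beta)^2 \fb(\alpha)^2 = \sum_{n=1}^{\infty} q_{n-1}(\alpha)^2 \sum_{\beta\in E_n(\alpha)} c(\beta)^2,
\end{equation*}
which is exactly the series whose finiteness defines $\E$. Therefore $\E$ is the set on which the quadratic variation of $\{S_Q\}$ is finite.

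With these reductions in place the three conclusions become instances of standard limit theorems for martingales with uniformly bounded increments, which is where the martingale results of Gundy \cite{gundy1966} enter. For such a martingale one has, almost everywhere, the dichotomy that either $S_Q$ converges to a finite limit or else $\liminf_Q S_Q = -\infty$ and $\limsup_Q S_Q = +\infty$, which yields (i) $|\cc\cup\D| = 1$. Moreover the convergence set agrees almost everywhere with the set of finite quadratic variation, which by the identification above yields (ii) $|\cc\setminus\E| = 0$ and (iii) $|\E\setminus\cc| = 0$. I expect the main obstacle to be conclusion (i): the oscillation half of the dichotomy is the deepest of these facts and uses the bounded-increment property essentially, typically through a stopping-time argument in which the martingale stopped upon first leaving an interval $[-c,c]$ is bounded, hence convergent, after which one lets $c\rar\infty$ and intersects over a countable sequence of levels.
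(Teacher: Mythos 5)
Your proposal is correct and follows essentially the same route as the paper: both identify the partial sums in (\ref{summ1}) with the martingale $T_Q(x)=\sum_{h(\beta)\le Q}c(\beta)\fb(x)$ relative to the $\sigma$-algebras $\M_Q$, derive uniformly bounded increments from the hypothesis on $c(\beta)h(\beta)$ (the paper's inequality (\ref{ineq1})), and then apply the bounded-increment dichotomy for (i) and the a.e.\ coincidence of the convergence set with the set of finite quadratic variation for (ii) and (iii). The only difference is one of emphasis: the paper quotes Durrett for (i) and carries out the stopping-time and orthogonality arguments (Lemma \ref{stopping0}) in detail for (ii) and (iii), precisely because Gundy's regularity hypothesis fails here, whereas you cite these as standard and sketch the stopping-time mechanism under (i); the underlying argument is the same.
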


\noindent  We note that the restriction $h(\beta) \le Q$ in (\ref{summ1}),
(\ref{summ2}), and (\ref{summ3}), effects only
the term for which $n = N$.  Obviously $N\rar\infty$ as $Q\rar\infty$, but
in a manner that depends on $\alpha$.

We would like to thank H.~L.~Montgomery for first calling our attention to
the functions $\fb$.


\section{A Complete System of Orthonormal Functions}

We will work with functions $g:\R/\Z\rar \C$ of bounded variation that
satisfy the condition
\begin{equation}
g(x) = \tfrac12g(x+) + \tfrac12g(x-)\label{norm1}
\end{equation}
at each point $x$.  When (\ref{norm1}) is satisfied we say that the
function $g$ is {\it normalized}.  The collection
of all normalized real (or complex) valued functions of bounded variation
on $\R/\Z$ is a
real (or complex) vector space.  An important example is the sawtooth
function $\psi:\R/\Z\rar \R$ defined by
\begin{equation*}
\psi(x) = \begin{cases} x-[x]-\tfrac12 & \text{if $x$ is not in $\Z$},\\
            0 & \text{if $x$ is in $\Z$},\end{cases}
\end{equation*}
where $[x]$ is the integer part of $x$.

For each positive integer $Q$ we define $\F_Q$ to be the finite set
$$\F_Q = \{\beta\in\Q/\Z: h(\beta) \le Q\}.$$
It follows that $\R/\Z\setminus\F_Q$ is the union of exactly
$$\bigl|\F_Q\bigr| = \sum_{q\le Q} \vphi(q)$$
component intervals, where $\vphi$ is the Euler $\vphi$-function.  Clearly
each component interval determines a unique
left hand endpoint $\beta_1$ in $\F_Q$ and a unique right hand endpoint
$\beta_2$ in $\F_Q$.  In this case it will be convenient to write
$I(\beta_1, \beta_2)$ for
the corresponding (open) component interval and $\oI(\beta_1, \beta_2)$
for its closure in $\R/\Z$.
We say that the elements of the ordered set $\{\beta_1, \beta_2\}$
are ${\it consecutive}$ points in $\F_Q$ if there exists a component
interval of the form $I(\beta_1, \beta_2)$ in
$\R/\Z\setminus\F_Q$.  More generally, if $\beta_1$ and $\beta_2$ are
points in $\Q/\Z$ we write $I(\beta_1, \beta_2)$
for the corresponding component interval whenever the elements of the
ordered set $\{\beta_1, \beta_2\}$ are consecutive
points in $\F_Q$ for some positive integer $Q$.
We note that the {\it normalized} characteristic function of the component
interval $I(\beta_1, \beta_2)$ is given by
$$\bigl|I(\beta_1, \beta_2)\bigr| + \psi(\beta_1 - x) + \psi(x - \beta_2),$$
where
$$\bigl|I(\beta_1, \beta_2)\bigr| = h(\beta_1)^{-1}h(\beta_2)^{-1}$$
is the Haar measure of $I(\beta_1, \beta_2)$.

Now suppose that $\beta$ is a nonzero point in $\Q/\Z$ such that $h(\beta)
= Q$.  Then there exists a unique point
$\beta'$ in $\F_Q$ such that $\{\beta', \beta\}$ are consecutive points in
$\F_Q$, and there exists a unique point $\beta''$
in $\F_Q$ such that $\{\beta, \beta''\}$ are consecutive points in $\F_Q$.
 Thus we have two well defined maps
$\beta\mapsto\beta'$ and $\beta\mapsto\beta''$ from $\Q/\Z\setminus\{0\}$
into $\Q/\Z$.  It is easy to verify that these maps
are both surjective.  And they satisfy the basic identities
\begin{equation}
h(\beta) = h(\beta') + h(\beta''),\quad \gcd\{h(\beta'),h(\beta)\} =
1,\quad \gcd\{h(\beta),h(\beta'')\} = 1,\label{hp1}
\end{equation}
and also
\begin{equation}
h(\beta - \beta') = h(\beta)h(\beta')\quad\text{and}\quad h(\beta''-\beta)
= h(\beta)h(\beta'').\label{hp2}
\end{equation}
We will often use the fact that if $\beta$ is a nonzero point in $\Q/\Z$
then $I(\beta',\beta'')$ is a component interval of $\R/\Z\setminus\F_q$
for all $q$ such that
$\max\{h(\beta'),h(\beta'')\} \le q < h(\beta)$.
Of course the remarks and notation we have developed here reflect well
known properties of Farey fractions (see \cite{hardy1979}
or \cite{khin1964}), but modified slightly to account for our working in
the group $\Q/\Z$.
The following result also follows easily from basic properties of Farey
fractions.

\begin{lemma}\label{farintlem}
Suppose that $\beta$ and $\gamma$ are distinct nonzero points in $\Q/\Z$.
If $h(\beta) \le h(\gamma)$ then exactly one of the
following holds:
$$I(\gamma',\gamma'')\subseteq I(\beta',\beta)\quad\text{or}
 \quad I(\gamma',\gamma'')\subseteq I(\beta,\beta'')\quad\text{or}\quad
I(\beta',\beta'')\cap I(\gamma',\gamma'') = \emptyset.$$
\end{lemma}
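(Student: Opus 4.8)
The plan is to reduce everything to the level-by-level refinement of the Farey intervals. Set $q_0 = h(\beta)$. I would first isolate two facts. (a) Since $\F_{q_0-1}\subseteq\F_q$ for every $q\ge q_0-1$, each component interval of $\R/\Z\setminus\F_q$ is contained in a unique component interval of $\R/\Z\setminus\F_{q_0-1}$ (components of a smaller open set refine those of a larger one). (b) By the remarks preceding the lemma, $I(\beta',\beta'')$ is a component interval of $\R/\Z\setminus\F_{q_0-1}$, and passing to $\F_{q_0}$ inserts precisely the point $\beta$ into it, so that $I(\beta',\beta'')=I(\beta',\beta)\cup\{\beta\}\cup I(\beta,\beta'')$ with the two children $I(\beta',\beta)$ and $I(\beta,\beta'')$ disjoint.

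Next I would locate $I(\gamma',\gamma'')$. It is a component interval of $\R/\Z\setminus\F_{h(\gamma)-1}$, and since $h(\gamma)\ge q_0$ we have $h(\gamma)-1\ge q_0-1$; hence by (a) there is a unique component interval $K$ of $\R/\Z\setminus\F_{q_0-1}$ with $I(\gamma',\gamma'')\subseteq K$. By (b), $K$ is either equal to $I(\beta',\beta'')$ or disjoint from it. If $K\cap I(\beta',\beta'')=\emptyset$, then $I(\gamma',\gamma'')\cap I(\beta',\beta'')=\emptyset$, which is the third alternative.

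In the remaining case $K=I(\beta',\beta'')$ I would argue that $\beta\notin I(\gamma',\gamma'')$ and then conclude that $I(\gamma',\gamma'')$ sits in a single child. If $h(\gamma)>q_0$ then $\beta\in\F_{q_0}\subseteq\F_{h(\gamma)-1}$, so $\beta$ cannot belong to the open interval $I(\gamma',\gamma'')$. If $h(\gamma)=q_0$ then $I(\gamma',\gamma'')$ is itself a component interval of $\R/\Z\setminus\F_{q_0-1}$ contained in $K=I(\beta',\beta'')$, hence equal to it, which forces $\gamma=\beta$ and is excluded by distinctness; so this sub-case does not occur. In either event $\beta\notin I(\gamma',\gamma'')$, and since $I(\gamma',\gamma'')$ is a connected open interval inside $I(\beta',\beta'')=I(\beta',\beta)\cup\{\beta\}\cup I(\beta,\beta'')$ that avoids the interior point $\beta$, it lies entirely in $I(\beta',\beta)$ or entirely in $I(\beta,\beta'')$, giving the first or second alternative.

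Finally, the three conclusions are pairwise incompatible because $I(\beta',\beta)$ and $I(\beta,\beta'')$ are disjoint and both contained in $I(\beta',\beta'')$, so a nonempty interval cannot satisfy two of them simultaneously; thus exactly one holds. The only step I expect to require genuine care is the equal-height case $h(\gamma)=h(\beta)$, where distinctness of $\beta$ and $\gamma$ must be invoked to rule out the coincidence $I(\gamma',\gamma'')=I(\beta',\beta'')$; the rest is bookkeeping with the nested Farey dissections.
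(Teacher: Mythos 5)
Your proof is correct and complete; the paper itself offers no argument for this lemma (it merely asserts that it ``follows easily from basic properties of Farey fractions''), and your reduction to the nested Farey dissections --- using the paper's own observation that $I(\beta',\beta'')$ is a component interval of $\R/\Z\setminus\F_q$ for $\max\{h(\beta'),h(\beta'')\}\le q<h(\beta)$, and that $\beta$ is the unique point of $\F_{h(\beta)}$ inserted into it --- is exactly the intended route. You are also right that the only delicate point is the equal-height case, which you correctly dispose of by noting that $I(\gamma',\gamma'')=I(\beta',\beta'')$ would force $\gamma$ and $\beta$ to be the common mediant of the same pair of endpoints.
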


If $g_1(x)$ and $g_2(x)$ are functions in $L^2\left(\R/\Z\right)$ we write
$$\langle g_1,g_2 \rangle = \int_{\R/\Z} g_1(x)\overline{g_2(x)}\
\dx\quad\text{and}\quad \|g_1\|_2
	= \Big\{\int_{\R/\Z} |g_1(x)|^2\ \dx\Big\}^{1/2}$$
for their inner product and norm, respectively.

For each point $\beta$ in $\Q/\Z$ we define a normalized function
$f_{\beta}:\R/\Z\rightarrow \R$ of bounded variation as follows.  If
$\beta = 0$ we
set $f_\beta(x) = 1$, and if $\beta \not= 0$ we set
\begin{equation}
f_{\beta}(x) = h(\beta)\psi(x - \beta) - h(\beta')\psi(x - \beta') -
h(\beta'')\psi(x - \beta'').\label{fbdef1}
\end{equation}
As the integral of $\psi$ over $\R/\Z$ is $0$, it follows immediately that
\begin{equation}
\int_{\R/\Z}\fb(x)\ \dx = \begin{cases} 1 &\text{if $\beta = 0$},
	\\ 0 &\text{if $\beta \not= 0$}.\end{cases}\label{fbintegral}
\end{equation}
If $h(\beta) = q \ge 2$, $h(\beta') = q'$, and $h(\beta'') = q''$, then a
useful alternative definition of $\fb$ is given by
\begin{align}
\fb(x) & = \begin{cases}q' &\text{ if $x\in I(\beta',\beta)$},\\
	-q'' &\text{ if $x\in I(\beta,\beta'')$},\\
	\tfrac12 q' &\text{ if $x=\beta'$},\\
	-\tfrac12 q'' &\text{ if $x=\beta''$},\\
	\tfrac12 (q'-q'') &\text{ if $x=\beta$},\\
	0 &\text{ if $x\notin\oI(\beta',\beta'')$}.\end{cases}\label{fbdef2}
\end{align}
It is obvious from (\ref{fbdef2}) that for $\beta\not= 0$ the function
$\fb$ is supported on the closed
set $\oI(\beta',\beta'')$.  Also, using (\ref{fbdef2}) we find that
\begin{equation}
\|\fb\|_2^2 = \int_{\R/\Z}\fb(x)^2\ \dx = \frac{(q')^2}{q' q} +
\frac{(q'')^2}{q q''} = 1.\label{fbnorm}
\end{equation}
Thus each function $\fb$ has norm $1$ and $\langle f_0, \fb\rangle = 0$
for $\beta \not= 0$.

Now suppose that $\beta$ and $\gamma$ are distinct nonzero points of
$\Q/\Z$.  Without loss of generality
we may assume that $h(\beta) \le h(\gamma)$. In view of Lemma
\ref{farintlem} there are three cases to
consider. If $I(\gamma',\gamma'')\subseteq I(\beta', \beta)$ then
\begin{equation*}
\langle f_{\beta},f_{\gamma}\rangle = \int_{\left(\gamma',\gamma''\right)}
f_{\beta}(x)f_{\gamma}(x)\ \dx
	= f_{\beta}(\gamma)\int_{\left(\gamma',\gamma''\right)} f_{\gamma}(x)\
\dx = 0.
\end{equation*}
The other cases lead to the same conclusion in a similar manner.
This shows that the collection of functions $\{\fb:\beta\in\Q/\Z\}$ forms
an orthonormal subset of $L^2(\R/\Z)$.

It remains now to show that the functions $\{\fb:\beta\in\Q/\Z\}$ form a
complete orthonormal basis for $L^2(\R/\Z)$.
Toward this end let $Q$ be a positive integer and define
$K_Q:\R/\Z\times\R/\Z\rightarrow \R$ by
\begin{equation}
K_Q(x,y) =\sum_{\beta\in\F_Q}f_{\beta}(x)f_{\beta}(y).\label{KQeq8}
\end{equation}
Note that for each $x$ the function $y\mapsto K_Q(x,y)$ is normalized and
that for each $y$ the function
$x\mapsto K_Q(x,y)$ is normalized.  For each $Q\geq 2$ we define a
function $J_Q:\R/\Z\times\R/\Z\rightarrow \R$ by
$$J_Q(x,y) =\sum_{h(\beta)=Q}f_{\beta}(x)f_{\beta}(y).$$
For $Q\geq 2$ it is clear that
\begin{equation}
K_Q(x,y) = K_{Q-1}(x,y) + J_Q(x,y).\label{KQeq9}
\end{equation}
We also define a map
$$\sigma:\left(\R/\Z\setminus\Q/\Z\right)\times\left(\R/\Z\setminus\Q/\Z\right)\rar\{1,
2, \dots \}\cup\{\infty\}$$
as follows: if $x=y$ then $\sigma(x,y) = \infty$, and if $x$ and $y$
are distinct irrational points in $\R/\Z$ we define $\sigma(x,y)$ to be
the smallest positive integer $Q$ such
that $x$ and $y$ are not in the same component interval of
$\R/\Z\setminus\F_Q$.

\begin{lemma}\label{KQeval}
Let $x$ and $y$ be points in $\R/\Z\setminus\F_Q$.
If $x$ and $y$ belong to the same component interval
$I(\gamma_1,\gamma_2)$ of $\R/\Z\setminus\F_Q$ then we have
$$K_Q(x,y) = h(\gamma_1)h(\gamma_2).$$
If $x$ and $y$ belong to distinct component intervals of
$\R/\Z\setminus\F_Q$ then
$$K_Q(x,y) = 0.$$
\end{lemma}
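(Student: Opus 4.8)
The plan is to induct on $Q$, using the decomposition (\ref{KQeq9}) to pass from $\F_{Q-1}$ to $\F_Q$. For the base case $Q = 1$ we have $\F_1 = \{0\}$ and $K_1(x,y) = f_0(x)f_0(y) = 1$; the set $\R/\Z \setminus \F_1$ is the single component interval whose two endpoints coincide at $0$, so $h(\gamma_1)h(\gamma_2) = 1$ as required, while the second assertion is vacuous.

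For the inductive step I first record the Farey subdivision fact that drives everything: the component intervals of $\R/\Z \setminus \F_{Q-1}$ that fail to remain component intervals of $\R/\Z \setminus \F_Q$ are precisely those of the form $I(\beta', \beta'')$ with $h(\beta) = Q$, and each such interval is split into $I(\beta', \beta)$ and $I(\beta, \beta'')$ by the single new point $\beta$. This follows from the remark preceding Lemma \ref{farintlem}, since $\max\{h(\beta'), h(\beta'')\} \le Q-1 < Q = h(\beta)$ forces $I(\beta', \beta'')$ to be a component interval of $\R/\Z \setminus \F_{Q-1}$. The second key observation is that, for points $x, y$ not in $\F_{Q-1}$, the term $J_Q(x,y) = \sum_{h(\beta)=Q} f_\beta(x)f_\beta(y)$ has at most one nonzero summand: each $f_\beta$ with $h(\beta) = Q$ is supported on $\oI(\beta', \beta'')$ by (\ref{fbdef2}), these closures of distinct component intervals of $\F_{Q-1}$ have pairwise disjoint interiors, and $x, y$ are interior points. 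Hence $J_Q(x,y) = f_\beta(x)f_\beta(y)$ for the unique $\beta$ (if any) whose support contains both $x$ and $y$, and $J_Q(x,y) = 0$ otherwise.

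With these in hand the proof reduces to a short case analysis, reading off $f_\beta$ from (\ref{fbdef2}) and using $h(\beta) = h(\beta') + h(\beta'')$ from (\ref{hp1}). If $x, y$ lie in a common component interval of $\R/\Z \setminus \F_Q$ that was not subdivided, then $J_Q(x,y) = 0$ and the inductive hypothesis carries the value over unchanged. If they lie in a half $I(\beta', \beta)$ of a subdivided interval, then $K_{Q-1}(x,y) = h(\beta')h(\beta'')$ while $J_Q(x,y) = f_\beta(x)f_\beta(y) = h(\beta')^2$, so $K_Q(x,y) = h(\beta')\bigl(h(\beta'') + h(\beta')\bigr) = h(\beta')h(\beta)$, matching the new endpoints $\beta', \beta$; the half $I(\beta, \beta'')$ is symmetric. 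If $x, y$ already lay in distinct component intervals of $\F_{Q-1}$, then $K_{Q-1}(x,y) = 0$ and $J_Q(x,y) = 0$, giving $K_Q(x,y) = 0$. Finally, if $x, y$ lay in a common interval $I(\beta', \beta'')$ of $\F_{Q-1}$ but are separated by the new point $\beta$, say $x \in I(\beta', \beta)$ and $y \in I(\beta, \beta'')$, then $K_{Q-1}(x,y) = h(\beta')h(\beta'')$ and $J_Q(x,y) = f_\beta(x)f_\beta(y) = h(\beta')\bigl(-h(\beta'')\bigr)$, and the two terms cancel to give $K_Q(x,y) = 0$.

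I expect the main obstacle to be purely organizational rather than computational: correctly identifying the Farey subdivision structure of the step $\F_{Q-1} \to \F_Q$ and verifying that $J_Q(x,y)$ collapses to a single product $f_\beta(x)f_\beta(y)$. Once that bookkeeping is in place, reading the two values of $f_\beta$ off (\ref{fbdef2}) is routine and each case telescopes immediately through the mediant relation $h(\beta) = h(\beta') + h(\beta'')$.
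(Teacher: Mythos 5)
Your proof is correct and follows essentially the same route as the paper's: induction on $Q$ via the decomposition $K_Q = K_{Q-1} + J_Q$, with the same case analysis evaluating $J_Q$ and telescoping through $h(\beta) = h(\beta') + h(\beta'')$. The only cosmetic difference is that the paper organizes the cases using the function $\sigma(x,y)$ and first treats irrational $x,y$ before extending to all of $\R/\Z\setminus\F_Q$ by local constancy, whereas you work directly with the Farey subdivision of component intervals, which handles all points of $\R/\Z\setminus\F_Q$ at once.
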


\begin{proof}
If $x$ and $y$ are irrational and $\sigma(x,y) \le Q-1$ then it is easily
seen that
$J_Q(x,y) = 0$.  If $\sigma(x,y)= Q$ then there exists a unique
element $\beta$ in $\F_Q\setminus\F_{Q-1}$ such that (after renaming $x$
and $y$ if necessary)
$$x\in I(\beta',\beta)\quad\text{and}\quad y\in I(\beta, \beta'').$$
It follows that $h(\beta) = Q$ and that $J_Q(x,y) =
-h(\beta')h(\beta'')$.  If $Q+1 \le \sigma(x,y)$ then there exists a pair of
consecutive points $\{\gamma_1, \gamma_2\}$ in $\F_Q$ such that both $x$
and $y$ belong to the component $I(\gamma_1, \gamma_2)$.
In this case we find that
$$J_Q(x,y) = \begin{cases} h(\gamma_2)^2 &\text{if $h(\gamma_1) = Q$},
	\\ h(\gamma_1)^2 &\text{if $h(\gamma_2) = Q$},\\ 0
&\text{otherwise}.\end{cases}$$
Next we use this information about $J_Q(x,y)$ to determine $K_Q(x,y)$.

We argue by induction on $Q$.  The case $Q=1$ is trivial, so we
assume that $Q\geq 2$ and that the assertion of the lemma holds for
$K_{Q-1}(x,y)$.
Now when $x$ and $y$ are irrational there are three cases to
consider.

If $\sigma(x,y) \le Q-1$ then $J_Q(x,y) = 0$ and $K_{Q-1}(x,y)= 0$
by the inductive hypothesis.  Hence $K_Q(x,y) = 0$ by (\ref{KQeq9}).

If $\sigma(x,y) = Q$ then there exists a unique point $\beta$ in
$\F_Q\setminus\F_{Q-1}$ such
that (after renaming $x$ and $y$ if necessary)
$$x\in I(\beta',\beta)\quad\text{and}\quad y\in I(\beta, \beta'').$$
We conclude that
$$J_Q(x,y) = -h(\beta')h(\beta''),$$
and by the inductive hypothesis
$$K_{Q-1}(x,y) = h(\beta')h(\beta'').$$
Again we find that $K_Q(x,y) = 0$ by (\ref{KQeq9}).

Finally, if $Q+1 \le \sigma(x,y)$ then there exists a pair of
consecutive points $\{\gamma_1, \gamma_2\}$ in $\F_Q$ such that both $x$
and $y$ belong to the component $I(\gamma_1, \gamma_2)$.
If $h(\gamma_1) \leq Q-1$ and $h(\gamma_2) \le Q-1$ then $J_Q(x,y) = 0$ and
$$K_{Q-1}(x,y) = h(\gamma_1)h(\gamma_2)$$
by the inductive hypothesis.  If $h(\gamma_1) = Q$ then $J_Q(x,y) =
h(\gamma_2)^2$.  It follows that $\gamma_2 = \gamma_1''$
and therefore $\{\gamma_1', \gamma_1''\}$ are consecutive points in
$\F_{Q-1}$.  Thus we find that
$$K_{Q-1}(x,y) = h(\gamma_1')h(\gamma_1'')$$
by the inductive hypothesis, and we conclude that
$$K_Q(x,y) = h(\gamma_1')h(\gamma_1'') + h(\gamma_1'')^2 =
h(\gamma_1)h(\gamma_1'') =  h(\gamma_1)h(\gamma_2).$$
If $h(\gamma_2) = Q$ the argument is essentially the same.  This proves
the lemma
when $x$ and $y$ are irrational.  However, comparing
(\ref{fbdef2}) and (\ref{KQeq8}) it is easy to see that the
functions $x\mapsto K_Q(x,y)$ and $y\mapsto K_Q(x,y)$ are constant
on the interior of all component intervals of $\F_Q$, so the result
of the lemma extends immediately to all points $x$ and $y$ in
$\R/\Z\setminus\F_Q$.
\end{proof}

\begin{lemma}\label{KQleb1}
Let $g:\R/\Z\rightarrow \R$ be an integrable function.  Then for
almost all points $x$ in $\R/\Z$ we have
$$\lim_{Q\rightarrow \infty} \int_{\R/\Z} g(y) K_Q(x,y)\ \dy = g(x).$$
\end{lemma}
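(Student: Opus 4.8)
The plan is to use Lemma \ref{KQeval} to recognize the integral as an average of $g$ over a shrinking interval, and then to invoke the martingale convergence theorem. First I would fix $Q$ and a point $x$ lying in a component interval $I(\gamma_1,\gamma_2)$ of $\R/\Z\setminus\F_Q$. By Lemma \ref{KQeval} the kernel $y\mapsto K_Q(x,y)$ equals $h(\gamma_1)h(\gamma_2)$ for $y$ in that same component and vanishes for $y$ in every other component; since $\F_Q$ has measure zero, the components of $\R/\Z\setminus\F_Q$ exhaust $\R/\Z$ up to a null set, and therefore
\begin{equation*}
\int_{\R/\Z} g(y) K_Q(x,y)\ \dy = h(\gamma_1)h(\gamma_2)\int_{I(\gamma_1,\gamma_2)} g(y)\ \dy = \frac{1}{|I(\gamma_1,\gamma_2)|}\int_{I(\gamma_1,\gamma_2)} g(y)\ \dy,
\end{equation*}
where I have used the identity $|I(\gamma_1,\gamma_2)| = h(\gamma_1)^{-1}h(\gamma_2)^{-1}$ recorded earlier. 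Thus the integral is exactly the average value of $g$ over the component interval of $\R/\Z\setminus\F_Q$ containing $x$.

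Next I would organize these averages as a martingale. Let $\A_Q$ denote the finite $\sigma$-algebra generated by the component intervals of $\R/\Z\setminus\F_Q$. As $Q$ increases, each newly adjoined point $\beta$ with $h(\beta)=Q$ lies in the interior of the single component $I(\beta',\beta'')$ of $\F_{Q-1}$ (valid since $h(\beta'),h(\beta'')<h(\beta)$ by \eqref{hp1}) and splits it into $I(\beta',\beta)$ and $I(\beta,\beta'')$; hence the partition is refined and $\A_{Q-1}\subseteq\A_Q$. The average computed above is precisely the conditional expectation $E[g\mid\A_Q](x)$, so the functions $x\mapsto\int_{\R/\Z} g(y)K_Q(x,y)\,\dy$ form a martingale adapted to the filtration $(\A_Q)$. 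Because $g$ is integrable, the martingale convergence theorem (L\'evy's upward theorem) yields that $E[g\mid\A_Q]$ converges almost everywhere as $Q\rar\infty$ to $E[g\mid\A_\infty]$, where $\A_\infty$ is the $\sigma$-algebra generated by $\bigcup_Q\A_Q$.

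It remains to identify the limit, and this is the step I expect to require the most care. I would show that $\A_\infty$ coincides, modulo null sets, with the full Borel $\sigma$-algebra on $\R/\Z$; granting this, $E[g\mid\A_\infty]=g$ almost everywhere and the proof is complete. The needed fact is that the mesh of the partition tends to zero, that is, the maximal length of a component interval of $\R/\Z\setminus\F_Q$ tends to $0$ as $Q\rar\infty$. This follows from the elementary fact that consecutive points $\{\gamma_1,\gamma_2\}$ in $\F_Q$ satisfy $h(\gamma_1)+h(\gamma_2)>Q$ (otherwise their mediant would lie in $\F_Q$), so that $h(\gamma_1)h(\gamma_2)\ge\max\{h(\gamma_1),h(\gamma_2)\}>Q/2$ and hence $|I(\gamma_1,\gamma_2)|<2Q^{-1}$. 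Since the partitions then separate points of $\R/\Z$, the generated $\sigma$-algebra $\A_\infty$ is the Borel $\sigma$-algebra, as required. As an alternative to the martingale step, once the integral is recognized as an average over intervals containing $x$ whose lengths tend to $0$, one may conclude directly from the one-dimensional Lebesgue differentiation theorem, controlling the averages by the Hardy--Littlewood maximal inequality; but the martingale route is the more natural one in the present context.
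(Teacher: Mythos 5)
Your proof is correct, and it reaches the conclusion by a genuinely different mechanism than the paper. Both arguments begin identically: Lemma \ref{KQeval} together with $|I(\gamma_1,\gamma_2)| = h(\gamma_1)^{-1}h(\gamma_2)^{-1}$ identifies $\int g(y)K_Q(x,y)\,\dy$ as the average of $g$ over the Farey interval of order $Q$ containing $x$. From there the paper proceeds by bare hands: it invokes the one-dimensional Lebesgue density theorem and, because the interval $I(\beta_Q,\gamma_Q)$ is not centered at $x$, splits the average into the two one-sided pieces over $(\beta_Q,x)$ and $(x,\gamma_Q)$, enlarging $(\gamma_Q-\beta_Q)^{-1}$ to $(x-\beta_Q)^{-1}$ and $(\gamma_Q-x)^{-1}$ respectively so that each piece is controlled by the density hypothesis. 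You instead observe that these averages are the conditional expectations $E[g\mid\A_Q]$ for the refining filtration of Farey partitions, apply L\'evy's upward theorem, and identify $\A_\infty$ with the Borel $\sigma$-algebra modulo null sets via the mesh bound $|I(\gamma_1,\gamma_2)| < 2Q^{-1}$ (the paper uses the sharper $\le Q^{-1}$, but yours suffices). Your route is cleaner once the martingale structure is available, and indeed the paper itself records in Section 5 that $\int K_Q(x,y)g(y)\,\dy$ is the conditional expectation of $g$ given $\M_Q$; there is no circularity, since that fact depends only on Lemma \ref{KQeval} and not on the completeness theorem that Lemma \ref{KQleb1} is used to prove. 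What the paper's argument buys is self-containedness at this early point in the exposition, before any filtration has been introduced; what yours buys is that the a.e.\ convergence is exhibited as an instance of the same martingale phenomenon that drives the rest of the paper, at the modest cost of the extra steps verifying the refinement of the partitions and that the mesh tends to zero.
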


\begin{proof}
By the Lebesgue density theorem we have
\begin{equation}
\lim_{z\rightarrow x} (z - x)^{-1}\int_x^z |g(y) - g(x)|\ \dy =
0\label{KQeq10}
\end{equation}
for almost all $x$ in $\R$.  Assume that $x$ is an irrational real number
such that (\ref{KQeq10}) holds.  For each $Q$ let
$\{\beta_Q, \gamma_Q\}$ be consecutive points in $\F_Q$ such that $x$
belongs to $I(\beta_Q, \gamma_Q)$.  Let $\beta_Q$ and
$\gamma_Q$ be coset representatives such that $\beta_Q < x < \gamma_Q$ and
$\gamma_Q - \beta_Q \le Q^{-1}$.  Then by Lemma
\ref{KQeval} we have
\begin{align*}
\left|\int_0^1 g(y) K_Q(x,y)\ \dy - g(x)\right|
	&=\left|(\gamma_Q - \beta_Q)^{-1}\int_{\beta_Q}^{\gamma_Q}\left(g(y) -
g(x)\right)\ \dy\right|\\
	&\le (\gamma_Q - \beta_Q)^{-1}\int_{\beta_Q}^x \left|g(y) - g(x)\right|\
\dy\\
	&\quad + (\gamma_Q - \beta_Q)^{-1}\int_x^{\gamma_Q} \left|g(y) -
g(x)\right|\ \dy\\
	&\le (x - \beta_Q)^{-1}\int_{\beta_Q}^x \left|g(y) - g(x)\right|\ \dy\\
	&\quad + (\gamma_Q - x)^{-1}\int_x^{\gamma_Q} \left|g(y) - g(x)\right|\ \dy.
\end{align*}
Because
$$\lim_{Q\rightarrow\infty} \beta_Q =
x\quad\text{and}\quad\lim_{Q\rightarrow\infty} \gamma_Q = x,$$
the result follows from (\ref{KQeq10}).
\end{proof}

\begin{theorem}\label{comporththm}
The collection of functions $\{\fb :\beta\in\Q/\Z\}$ forms a complete,
orthonormal basis for $L^2(\R/\Z)$.
\end{theorem}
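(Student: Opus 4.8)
The plan is to leverage the work already in place: the discussion following (\ref{fbnorm}), together with Lemma \ref{farintlem}, has already verified that $\{f_\beta : \beta\in\Q/\Z\}$ is an orthonormal set. So the only thing remaining is completeness, which I would phrase in the standard form: it suffices to show that if $g\in L^2(\R/\Z)$ satisfies $\langle g, f_\beta\rangle = 0$ for every $\beta\in\Q/\Z$, then $g = 0$ almost everywhere.

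The key observation linking the previous lemmas to completeness is the reproducing property of the kernel $K_Q$. Since each $f_\beta$ is real valued and $\F_Q$ is finite, I would interchange the (finite) sum with the integral to obtain, directly from the definition (\ref{KQeq8}),
\[
\int_{\R/\Z} g(y)\,K_Q(x,y)\ \dy = \sum_{\beta\in\F_Q} \langle g, f_\beta\rangle\, f_\beta(x).
\]
If $g$ is orthogonal to every $f_\beta$, then the right hand side vanishes identically, and hence the left hand side is $0$ for every positive integer $Q$.

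Next I would invoke Lemma \ref{KQleb1}. Because $\R/\Z$ carries a probability measure we have the inclusion $L^2(\R/\Z)\subseteq L^1(\R/\Z)$, so every $g\in L^2(\R/\Z)$ is integrable and the lemma applies, giving
\[
g(x) = \lim_{Q\rar\infty}\int_{\R/\Z} g(y)\,K_Q(x,y)\ \dy = 0
\]
for almost all $x$. Thus $g = 0$ in $L^2(\R/\Z)$, which shows the orthonormal system has trivial orthogonal complement and is therefore complete.

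I do not expect a serious obstacle, since the substantive content has been isolated into Lemma \ref{KQeval} (which identifies $K_Q$ as the kernel of orthogonal projection onto the span of $\{f_\beta : h(\beta)\le Q\}$, equivalently onto the step functions constant on the components of $\R/\Z\setminus\F_Q$) and into Lemma \ref{KQleb1} (the pointwise convergence of these projections). The only points needing care are routine: the interchange of summation and integration is immediate because the sum is finite, the step $\overline{f_\beta} = f_\beta$ uses that $f_\beta$ is real, and the application of Lemma \ref{KQleb1} to an arbitrary $L^2$ function rests on $L^2(\R/\Z)\subseteq L^1(\R/\Z)$.
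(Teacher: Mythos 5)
Your proposal is correct and follows essentially the same route as the paper: orthonormality is taken from the preceding discussion, and completeness is proved by showing that a $g$ orthogonal to every $f_\beta$ satisfies $\int_{\R/\Z} g(y)K_Q(x,y)\ \dy = 0$ and then letting $Q\rar\infty$ via Lemma \ref{KQleb1}. Your explicit remarks on the finite interchange of sum and integral and on $L^2(\R/\Z)\subseteq L^1(\R/\Z)$ are details the paper leaves implicit, but there is no substantive difference.
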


\begin{proof}
Suppose that $g(x)$ is in $L^2(\R/\Z)$ and $g(x)$ is orthogonal to each
function $\fb(x)$.  That is, we suppose that
$$\langle g,\fb\rangle = \int_{\R/\Z}g(y)\fb(y)\ \dy = 0$$
for each $\beta$ in $\Q/\Z$.  Then we have
\begin{equation}
\int_{\R/\Z} g(y) K_Q(x,y)\ \dy= \sum_{\beta\in\F_Q}\langle g,\fb\rangle
\fb(x) =0\label{KQeq12}
\end{equation}
for all points $x$ in $\R/\Z$.  Letting $Q\rightarrow\infty$ in
(\ref{KQeq12}) and applying Lemma \ref{KQleb1}, it
follows that $g(x)$ is $0$ in $L^2(\R/\Z)$.  This shows that the
collection $\{\fb\}$ is complete in $L^2(\R/\Z)$.
\end{proof}


\section{The Continued Fraction Interpretation}

We return to consideration of the principal convergents and intermediate
convergents from the continued fraction expansion
of an irrational real number $\alpha$.  It will be convenient to assume
that $0 < \alpha < 1$ and to write $\ff_Q$ for the
set of Farey fractions in $[0,1]$ of order $Q$.  As is well known, the
convergents and intermediate convergents to
$\alpha$ satisfy the following inequalities.  If $n$ is an odd positive
integer we have
\begin{equation}
\frac{p_{n-1}}{q_{n-1}} < \alpha < \frac{a_np_{n-1} + p_{n-2}}{a_nq_{n-1}
+ q_{n-2}}< \dots < \frac{2p_{n-1} + p_{n-2}}{2q_{n-1}
	+ q_{n-2}} < \frac{p_{n-1} + p_{n-2}}{q_{n-1}+q_{n-2}}\label{cfraceq11}
\end{equation}
and if $n$ is an even positive integer then
\begin{equation}
\frac{p_{n-1} + p_{n-2}}{q_{n-1} +
q_{n-2}}<\frac{2p_{n-1}+p_{n-2}}{2q_{n-1}+q_{n-2}}<\dots<\frac{a_np_{n-1}+p_{n-2}}{a_nq_{n-1}
	+ q_{n-2}} < \alpha < \frac{p_{n-1}}{q_{n-1}}.\label{cfraceq12}
\end{equation}

Next we observe that for each positive integer $Q$ there exists a unique
pair of positive integers $M$ and $N$ such that
\begin{equation}
1 \leq M \leq a_N\quad\text{and}\quad Mq_{N-1} + q_{N-2} \leq Q <
(M+1)q_{N-1} + q_{N-2}.\label{cfraceq15}
\end{equation}
If $N$ is odd we have
\begin{equation}
\frac{p_{N-1}}{q_{N-1}} < \alpha < \frac{Mp_{N-1} +
p_{N-2}}{Mq_{N-1}+q_{N-2}}\label{cfraceq16}
\end{equation}
and if $N$ is even then
\begin{equation}
\frac{Mp_{N-1} + p_{N-2}}{Mq_{N-1} + q_{N-2}}<\alpha <
\frac{p_{N-1}}{q_{N-1}}.\label{cfraceq17}
\end{equation}
Equations (\ref{cfraceq16}) and (\ref{cfraceq17}) determine the unique
open Farey interval in $[0,1]\setminus \ff_Q$ that
contains $\alpha$.

\begin{lemma}\label{farey0}
Let $\alpha$ be an irrational point in $\R/\Z$ and let
\begin{equation*}
\beta = \frac{mp_{n-1} + p_{n-2}}{mq_{n-1}+q_{n-2}},\quad\text{where}\quad
1 \leq m \leq a_n,
\end{equation*}
be a nonzero point in $E_n(\alpha)$ for some positive integer $n$.  If $n$
is odd then
\begin{equation}
\beta' = \frac{p_{n-1}}{q_{n-1}},\quad\beta'' = \frac{(m-1)p_{n-1} +
p_{n-2}}{(m-1)q_{n-1} + q_{n-2}},
	\quad\text{and}\quad \alpha\in I(\beta',\beta).\label{pr1}
\end{equation}
If $n$ is even then
\begin{equation}
\beta' = \frac{(m-1)p_{n-1} + p_{n-2}}{(m-1)q_{n-1} +
q_{n-2}},\quad\beta'' = \frac{p_{n-1}}{q_{n-1}},
	\quad\text{and}\quad \alpha\in I(\beta,\beta'').\label{pr2}
\end{equation}
\end{lemma}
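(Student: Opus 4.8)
The plan is to identify the Farey neighbours $\beta'$ and $\beta''$ of $\beta$ at level $Q=h(\beta)$ explicitly, and then to read off the position of $\alpha$ from the ordering inequalities (\ref{cfraceq11}) and (\ref{cfraceq12}). Write $P=mp_{n-1}+p_{n-2}$ and $D=mq_{n-1}+q_{n-2}$, so that $\beta=P/D$. Everything rests on the classical continued fraction determinant identity $p_{n-1}q_{n-2}-p_{n-2}q_{n-1}=(-1)^{n}$. Expanding $Pq_{n-1}-Dp_{n-1}$ collapses to $p_{n-2}q_{n-1}-p_{n-1}q_{n-2}=(-1)^{n-1}$; in particular $\gcd(P,D)=1$, so indeed $h(\beta)=D=mq_{n-1}+q_{n-2}$. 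The same computation with $m$ replaced by $m-1$ shows $\frac{(m-1)p_{n-1}+p_{n-2}}{(m-1)q_{n-1}+q_{n-2}}$ is reduced, and $p_{n-1}/q_{n-1}$ is reduced by the usual convergent identity, so the heights of the candidate neighbours are as written in (\ref{pr1}) and (\ref{pr2}).

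First I would treat $n$ odd, verifying that the candidates $\tilde\beta'=p_{n-1}/q_{n-1}$ and $\tilde\beta''=\frac{(m-1)p_{n-1}+p_{n-2}}{(m-1)q_{n-1}+q_{n-2}}$ really are the neighbours of $\beta$ in $\F_Q$. The tool is the elementary fact that if $a/b<c/d$ are reduced with $bc-ad=1$, then any fraction $e/f$ with $a/b<e/f<c/d$ satisfies $f=b(cf-de)+d(be-af)\ge b+d$, since each factor is a positive integer. The identity $Pq_{n-1}-Dp_{n-1}=(-1)^{n-1}=1$ exhibits $\tilde\beta'$ and $\beta$ as a unimodular pair with $\tilde\beta'<\beta$, and expanding $D\big((m-1)p_{n-1}+p_{n-2}\big)-P\big((m-1)q_{n-1}+q_{n-2}\big)=(-1)^{n-1}=1$ exhibits $\beta$ and $\tilde\beta''$ as a unimodular pair with $\beta<\tilde\beta''$. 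Since $h(\tilde\beta')=q_{n-1}\le D=Q$ and $h(\tilde\beta'')\le Q$, while the intervening mediant in each case has denominator exceeding $Q$, no element of $\F_Q$ lies strictly between $\tilde\beta'$ and $\beta$ or between $\beta$ and $\tilde\beta''$; by the uniqueness of the points defining $\beta'$ and $\beta''$ this gives $\beta'=\tilde\beta'$ and $\beta''=\tilde\beta''$. As a consistency check their heights sum to $mq_{n-1}+q_{n-2}=h(\beta)$, in agreement with (\ref{hp1}). The case $n$ even is identical except that both determinants now equal $(-1)^{n-1}=-1$, so the inequalities flip and $p_{n-1}/q_{n-1}$ becomes the right neighbour, yielding (\ref{pr2}).

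It then remains to locate $\alpha$. For $n$ odd, (\ref{cfraceq11}) gives $p_{n-1}/q_{n-1}<\alpha<\frac{a_np_{n-1}+p_{n-2}}{a_nq_{n-1}+q_{n-2}}$, and because the fractions $\frac{mp_{n-1}+p_{n-2}}{mq_{n-1}+q_{n-2}}$ decrease in $m$ the upper bound is at most $\beta$; hence $\beta'<\alpha<\beta$, and since $\beta',\beta$ are consecutive in $\F_Q$ the open interval between them is exactly the component interval, so $\alpha\in I(\beta',\beta)$. The even case uses (\ref{cfraceq12}) in the same way to give $\beta<\alpha<\beta''$ and $\alpha\in I(\beta,\beta'')$. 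I expect the main obstacle to be the bookkeeping that certifies genuine \emph{adjacency} rather than mere unimodularity: a single determinant relation does not forbid an intervening Farey fraction, and it is precisely the denominator bound $f\ge b+d>Q$ that does. A secondary point to watch is that all coset representatives should be taken in $[0,1]$ so that the cyclic order on $\R/\Z$ agrees with the usual order and $I(\beta',\beta)$ coincides with the real interval $(\beta',\beta)$; the degenerate cases (such as $m=1$ or $n=1$, where a neighbour can equal $0$) are then covered by the same formulas.
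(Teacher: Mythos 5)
Your proof is correct and follows essentially the same route as the paper: identify the Farey neighbours of $\beta$ at level $Q=mq_{n-1}+q_{n-2}$ as $p_{n-1}/q_{n-1}$ and $\frac{(m-1)p_{n-1}+p_{n-2}}{(m-1)q_{n-1}+q_{n-2}}$ (in the order dictated by the parity of $n$), then locate $\alpha$ via the ordering inequalities (\ref{cfraceq11}) and (\ref{cfraceq12}). The only difference is that the paper cites the consecutiveness of these three fractions in $\ff_Q$ as a standard property of Farey fractions, whereas you derive it from the determinant identity together with the denominator bound $f\ge b+d$; this is a welcome filling-in of detail rather than a different argument.
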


\begin{proof}  If $n$ is an odd positive integer then it follows from
properties of Farey fractions that the three fractions
\begin{equation}
\frac{p_{n-1}}{q_{n-1}} < \frac{mp_{n-1} + p_{n-2}}{mq_{n-1} + q_{n-2}} <
\frac{(m-1)p_{n-1} +
	p_{n-2}}{(m-1)q_{n-1}+q_{n-2}}\label{cfraceq13}
\end{equation}
are consecutive points of $\ff_Q$ for $Q = mq_{n-1} + q_{n-2}$.  (Note
that $(m,n) \not= (1,1)$ because $\beta \not= 0$.)
This verifies the identities for $\beta'$ and $\beta''$ in (\ref{pr1}).
Then it follows from (\ref{cfraceq11}) that
$\alpha$ belongs to $I(\beta',\beta)$.  Similarly, if $n$ is an even
positive integer then the three fractions
\begin{equation}
\frac{(m-1)p_{n-1} + p_{n-2}}{(m-1)q_{n-1}+q_{n-2}} <
\frac{mp_{n-1}+p_{n-2}}{mq_{n-1} +
	q_{n-2}}<\frac{p_{n-1}}{q_{n-1}}\label{cfraceq14}
\end{equation}
are consecutive points of $\ff_Q$ for $Q = mq_{n-1} + q_{n-2}$.  The
assertions in (\ref{pr2}) follow as in the previous case.
\end{proof}

The functions $\{\fb\}$ were initially defined by (\ref{fbdef1}) and
(\ref{fbdef2}).  We now show that the value
of $\fb(\alpha)$ depends in a simple way on the convergents and
intermediate convergents to $\alpha$.

\begin{proof}[Proof of Theorem \ref{fbetacontfr}]
If $\beta = 0$ then $\beta$ belongs to $E_1(\alpha)$  and
(\ref{fbcfprop1}) is obvious.  Assume that $\beta\not= 0$ belongs
to $E_n$ and that
\begin{equation*}
\beta = \frac{mp_{n-1} + p_{n-2}}{mq_{n-1}+q_{n-2}}\quad\text{where}\quad
1 \leq m \leq a_n.
\end{equation*}
If $n$ is odd then it follows from (\ref{pr1}) that $\alpha$ belongs to
the component interval $I(\beta',\beta)$.  From
the definition of $f_{\beta}$ we conclude that
$$f_{\beta}(\alpha) = q_{n-1} = (-1)^{n-1}q_{n-1}.$$
Similarly, if $n$ is even then (\ref{pr2}) implies that $\alpha$ belongs
to the component interval $I(\beta,\beta'')$.
In this case we find that
$$f_{\beta}(\alpha) = -q_{n-1} = (-1)^{n-1}q_{n-1}.$$

Now assume that $f_{\beta}(\alpha) \not= 0$.  If $\beta = 0$ then $\beta$
belongs to $E_1$.  Otherwise we have either
\begin{equation}
\alpha\in I(\beta',\beta)\quad\text{or}\quad \alpha\in
I(\beta,\beta'').\label{fbcfprop3}
\end{equation}
Write $h(\beta)=Q$ and as in (\ref{cfraceq15}) let $M$ and $N$ be the
unique positive integers such that
$$1 \le M \le a_N\quad\text{and}\quad Mq_{N-1} + q_{N-2} \le Q <
(M+1)q_{N-1} + q_{N-2}.$$
If $N$ is odd then (\ref{cfraceq16}) and (\ref{fbcfprop3}) imply that
\begin{equation}
\beta = \frac{Mp_{N-1} + p_{N-2}}{Mq_{N-1} + q_{N-2}},\label{fbcfprop4}
\end{equation}
and this shows that $\beta$ belongs to $E_N$.  Similarly, if $N$ is even
then (\ref{cfraceq17}) and (\ref{fbcfprop3}) imply
that (\ref{fbcfprop4}) holds, and again we conclude that $\beta$ belongs
to $E_N$.
\end{proof}

Certain partial sums involving the functions $\fb$ also have a natural
Diophantine interpretation.  For each positive integer $Q$ we define two
functions
$$L_Q:\R/\Z\setminus \F_Q \rightarrow \{1, 2, \dots , Q\}\quad\text{and}
	\quad R_Q:\R/\Z\setminus \F_Q \rightarrow \{1, 2, \dots , Q\}.$$
If $\alpha$ is a point in $\R/\Z\setminus \F_Q$ then there exists a unique
pair $\{\gamma_1, \gamma_2\}$ of consecutive
points in $\F_Q$ such that $\alpha$ belongs to $I(\gamma_1, \gamma_2)$.
We define $L_Q(\alpha) = h(\gamma_1)$ and
$R_Q(\alpha) =h(\gamma_2)$.  From (\ref{KQeq8}) and Lemma \ref{KQeval} we
obtain the identity
\begin{equation}
1+\sum_{2\leq h(\beta)\le Q} f_{\beta}(\alpha)^2 = L_Q(\alpha)R_Q(\alpha)
\label{LReq4}
\end{equation}
for $\alpha$ in $\R/\Z\setminus \F_Q$.  We now establish some further
identities of this sort.

\begin{lemma}\label{LQRQeval}
If $\alpha\in\R/\Z\setminus \F_Q$ then
\begin{equation}
2+\sum_{2\le h(\beta)\le Q} |f_{\beta}(\alpha)| =
R_Q(\alpha)+L_Q(\alpha),\label{LReq5}
\end{equation}
and
\begin{equation}
\sum_{2\le h(\beta)\le Q} f_{\beta}(\alpha) = R_Q(\alpha) -
L_Q(\alpha).\label{LReq6}
\end{equation}
\end{lemma}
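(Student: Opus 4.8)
The plan is to argue by induction on $Q$, tracking how the Farey interval containing $\alpha$ evolves as $Q$ increases, in the same spirit as the proof of Lemma \ref{KQeval}. It suffices to establish both identities for irrational $\alpha$: since each $f_\beta$ with $h(\beta) \le Q$ is constant on the interior of every component interval of $\R/\Z \setminus \F_Q$, and since $L_Q(\alpha)$ and $R_Q(\alpha)$ depend only on which component interval contains $\alpha$, the conclusion then extends immediately to all $\alpha$ in $\R/\Z \setminus \F_Q$. The base case $Q = 1$ is trivial: $\F_1 = \{0\}$, so $L_1(\alpha) = R_1(\alpha) = 1$ and both sums are empty, giving $2 = 1 + 1$ for (\ref{LReq5}) and $0 = 1 - 1$ for (\ref{LReq6}).

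For the inductive step I would fix an irrational $\alpha$ in $\R/\Z \setminus \F_Q$ and let $\{\gamma_1, \gamma_2\}$ be the consecutive points of $\F_{Q-1}$ with $\alpha \in I(\gamma_1, \gamma_2)$, so that $L_{Q-1}(\alpha) = h(\gamma_1)$ and $R_{Q-1}(\alpha) = h(\gamma_2)$. Passing from $\F_{Q-1}$ to $\F_Q$ inserts precisely the points of height $Q$, and the only one of these that can make $f_\beta(\alpha)$ nonzero is the mediant $\beta$ of $\gamma_1$ and $\gamma_2$, since $\alpha$ lies in the interior of the single component interval $I(\gamma_1,\gamma_2)$ and the support of any $f_\delta$ with $h(\delta)=Q$ is a closed component interval of $\F_{Q-1}$. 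This mediant satisfies $\beta' = \gamma_1$, $\beta'' = \gamma_2$, and by (\ref{hp1}) it has height $h(\gamma_1) + h(\gamma_2)$. Hence there are two cases. If $h(\gamma_1) + h(\gamma_2) > Q$, then $\beta$ has not yet been inserted, the interval $I(\gamma_1,\gamma_2)$ is unchanged, every new term with $h(\beta)=Q$ vanishes at $\alpha$, and both identities pass directly from $Q-1$ to $Q$.

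The substantive case is $h(\gamma_1) + h(\gamma_2) = Q$, when $\beta$ is inserted with $h(\beta) = Q$ and splits $I(\gamma_1, \gamma_2)$ into $I(\gamma_1, \beta)$ and $I(\beta, \gamma_2)$. Here $f_\beta$ contributes the single new nonzero term, and by (\ref{fbdef2}) its value at $\alpha$ is $h(\gamma_1) = L_{Q-1}(\alpha)$ if $\alpha \in I(\gamma_1,\beta)$ and $-h(\gamma_2) = -R_{Q-1}(\alpha)$ if $\alpha \in I(\beta,\gamma_2)$. Correspondingly, in the first subcase $L_Q(\alpha) = L_{Q-1}(\alpha)$ and $R_Q(\alpha) = h(\beta) = L_{Q-1}(\alpha) + R_{Q-1}(\alpha)$, while in the second subcase $R_Q(\alpha) = R_{Q-1}(\alpha)$ and $L_Q(\alpha) = h(\beta) = L_{Q-1}(\alpha) + R_{Q-1}(\alpha)$. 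Substituting the value of $f_\beta(\alpha)$ together with these updated heights into the inductive hypotheses, namely (\ref{LReq5}) and (\ref{LReq6}) with $Q$ replaced by $Q-1$, yields both identities at level $Q$ in either subcase.

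The only delicate point is this last verification: one must pair the sign of $f_\beta(\alpha)$ (positive on the left half $I(\gamma_1,\beta)$, negative on the right half $I(\beta,\gamma_2)$) with the correct update rule for $L_Q(\alpha)$ and $R_Q(\alpha)$, noting that in each subcase exactly one of these two heights jumps to the value $Q = L_{Q-1}(\alpha) + R_{Q-1}(\alpha)$ while the other is unchanged. This is short bookkeeping rather than a genuine obstacle; the real content is supplied by the mediant height relation (\ref{hp1}) and the explicit values in (\ref{fbdef2}), so I expect no step to present serious difficulty.
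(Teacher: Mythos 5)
Your induction is correct, and it is a genuinely different route from the paper's. The paper proves the lemma by a direct computation: it invokes Theorem \ref{fbetacontfr} to identify the nonzero terms as the convergents and intermediate convergents in $E_1,\dots,E_{N-1}$ together with the first $M$ elements of $E_N$, evaluates each as $(-1)^{n-1}q_{n-1}$, telescopes the resulting sums via $a_nq_{n-1}=q_n-q_{n-2}$, and identifies $R_Q(\alpha)$ and $L_Q(\alpha)$ explicitly as $q_{N-1}$ and $Mq_{N-1}+q_{N-2}$ (in the order dictated by the parity of $N$). You instead induct on $Q$ in the style of Lemma \ref{KQeval}, using only the Farey mechanism: the unique candidate new term at level $Q$ is the mediant of the enclosing interval's endpoints, it appears exactly when $h(\gamma_1)+h(\gamma_2)=Q$ by (\ref{hp1}), and its sign from (\ref{fbdef2}) matches which of $L_Q,R_Q$ jumps to $Q$. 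I verified both subcases of your bookkeeping and they close correctly, as does the base case $Q=1$ and the reduction to irrational $\alpha$ (which the paper also makes). What each approach buys: yours stays entirely within the Farey-fraction framework of Section 2 and needs none of the continued fraction apparatus, so it is more self-contained and runs parallel to the kernel computation for $K_Q$; the paper's computation is non-inductive and yields as a by-product the explicit continued fraction formulas (\ref{LReq8}) and (\ref{LReq10}) for $L_Q(\alpha)$ and $R_Q(\alpha)$, which are reused elsewhere (e.g., in Theorem \ref{cfracchi} and Section 5).
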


\begin{proof}
As in the proof of Lemma \ref{KQeval}, it suffices to establish
(\ref{LReq5}) and (\ref{LReq6}) for $\alpha$ irrational.  For
$n = 1, 2, \dots $ let $E_n$ be the collection of convergents and
intermediate convergents defined by (\ref{cfraceq10}).  Let $M$ and $N$ be
the unique positive integers
such that (\ref{cfraceq15}) holds.  If $N$ is odd then from
(\ref{cfraceq16}) we find that
\begin{equation*}
\{\gamma_1,\gamma_2\} = \Bigg\{\frac{p_{N-1}}{q_{N-1}}, \frac{Mp_{N-1} +
p_{N-2}}{Mq_{N-1}+q_{N-2}}\Bigg\}
\end{equation*}
are consecutive points in $\F_Q$ such that $\alpha$ belongs to
$I(\gamma_1,\gamma_2)$.  This implies that
\begin{equation}
L_Q(\alpha) = q_{N-1}\quad\text{and}\quad R_Q(\alpha) = Mq_{N-1} +
q_{N-2}.\label{LReq8}
\end{equation}
If $N$ is even then (\ref{cfraceq17}) implies that
\begin{equation*}
\{\gamma_1,\gamma_2\} = \Bigg\{\frac{Mp_{N-1} + p_{N-2}}{Mq_{N-1} +
q_{N-2}},\frac{p_{N-1}}{q_{N-1}}\Bigg\}
\end{equation*}
are consecutive points in $\F_Q$ such that $\alpha$ belongs to
$I(\gamma_1,\gamma_2)$.  In this case we
conclude that
\begin{equation}
L_Q(\alpha) = Mq_{N-1} + q_{N-2}\quad\text{and}\quad R_Q(\alpha) =
q_{N-1}.\label{LReq10}
\end{equation}
For each $m\in\{1, 2, \dots , a_N\}$ write
$$\beta_m = \frac{mp_{N-1} + p_{N-2}}{mq_{N-1}+q_{N-2}}.$$
Using (\ref{fbcfprop1}), (\ref{LReq8}), and (\ref{LReq10}) we find that
\begin{align}
\sum_{h(\beta)\le Q} |f_{\beta}(\alpha)|&= \sum_{n=1}^{N-1} \sum_{\beta\in
E_n} |f_{\beta}(\alpha)|
	+ \sum_{m=1}^M |f_{\beta_m}(\alpha)|\nonumber\\
    &= \sum_{n=1}^{N-1} \sum_{\beta\in E_n} q_{n-1} + \sum_{m=1}^M
q_{N-1}\nonumber\\
    &= \sum_{n=1}^{N-1} a_nq_{n-1} + Mq_{N-1}\label{LReq11}\\
    &= \sum_{n=1}^{N-1} (q_n - q_{n-2}) + Mq_{N-1}\nonumber\\
    &= q_{N-1} + Mq_{N-1} + q_{N-2} - 1\nonumber\\
    &= R_Q(\alpha) + L_Q(\alpha) - 1.\nonumber
\end{align}
As $f_0(\alpha) = 1$, it is clear that (\ref{LReq11}) is equivalent
to (\ref{LReq5}).

In a similar manner, using (\ref{LReq8}) and (\ref{LReq10}) we get
\begin{align}
\sum_{h(\beta)\le Q} f_{\beta}(\alpha)&= \sum_{n=1}^{N-1} \sum_{\beta\in
E_n} f_{\beta}(\alpha)
	+ \sum_{m=1}^Mf_{\beta_m}(\alpha)\nonumber\\
    &= \sum_{n=1}^{N-1} \sum_{\beta\in E_n} (-1)^{n-1}q_{n-1} +
\sum_{m=1}^M(-1)^{N-1}q_{N-1}\nonumber\\
    &= \sum_{n=1}^{N-1} (-1)^{n-1}a_nq_{n-1} + (-1)^{N-1}Mq_{N-1}\nonumber\\
    &= \sum_{n=1}^{N-1} (-1)^{n-1}(q_n - q_{n-2}) +
(-1)^{N-1}Mq_{N-1}\nonumber\\
    &= (-1)^{N-1}\{Mq_{N-1} + q_{N-2} - q_{N-1}\} +1\nonumber\\
    &= R_Q(\alpha) - L_Q(\alpha) + 1,\nonumber
\end{align}
which proves (\ref{LReq6}).
\end{proof}
\begin{corollary}
If $\alpha$ belongs to $\R/\Z\setminus \F_Q$ then
\begin{equation*}
1+\sum_{2\le h(\beta)\le Q} f_{\beta}^+(\alpha) =
R_Q(\alpha)\quad\text{and}\quad 1
	+ \sum_{2\le h(\beta)\le Q}f_{\beta}^-(\alpha)= L_Q(\alpha).
\end{equation*}
\end{corollary}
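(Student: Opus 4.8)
The plan is to obtain both identities as immediate algebraic consequences of Lemma \ref{LQRQeval}, using only the elementary decomposition of a real number into its positive and negative parts. Recall the standard convention $g^+ = \tfrac12\left(|g| + g\right)$ and $g^- = \tfrac12\left(|g| - g\right)$, valid for any real number $g$, so that $|g| = g^+ + g^-$ and $g = g^+ - g^-$. Applying this pointwise to the values $\fb(\alpha)$ and summing, the corollary reduces to forming suitable linear combinations of the two formulas (\ref{LReq5}) and (\ref{LReq6}) already established for $\alpha$ in $\R/\Z\setminus\F_Q$.

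First I would add (\ref{LReq5}) and (\ref{LReq6}). By linearity of the sum the left-hand side becomes $2 + \sum_{2 \le h(\beta) \le Q}\left(|\fb(\alpha)| + \fb(\alpha)\right)$, which equals $2 + 2\sum_{2 \le h(\beta) \le Q} \fb^+(\alpha)$; the right-hand side becomes $\left(R_Q(\alpha) + L_Q(\alpha)\right) + \left(R_Q(\alpha) - L_Q(\alpha)\right) = 2R_Q(\alpha)$. Dividing by $2$ gives the first claimed identity $1 + \sum_{2 \le h(\beta) \le Q} \fb^+(\alpha) = R_Q(\alpha)$. For the second identity I would instead subtract (\ref{LReq6}) from (\ref{LReq5}): the left-hand side becomes $2 + 2\sum_{2 \le h(\beta) \le Q} \fb^-(\alpha)$ and the right-hand side becomes $2L_Q(\alpha)$, so dividing by $2$ yields $1 + \sum_{2 \le h(\beta) \le Q} \fb^-(\alpha) = L_Q(\alpha)$.

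There is no real obstacle to overcome, since everything follows from Lemma \ref{LQRQeval} by addition, subtraction, and division by $2$. The only points deserving attention are the sign convention for $g^+$ and $g^-$ and the bookkeeping of the additive constant: the term $2$ on the left of (\ref{LReq5}), which is the only constant appearing in either formula, becomes the constant $1$ after division by $2$ and thus reproduces exactly the constant on the left of each half of the corollary. No further properties of continued fractions or Farey fractions are required beyond those already packaged into Lemma \ref{LQRQeval}.
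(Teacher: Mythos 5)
Your proof is correct and is essentially the same as the paper's: the authors also rewrite (\ref{LReq5}) and (\ref{LReq6}) as the sum and difference of the two quantities $1+\sum f_{\beta}^{+}(\alpha)$ and $1+\sum f_{\beta}^{-}(\alpha)$ and then solve the resulting linear system. Your sign conventions and bookkeeping of the constant $2$ all check out.
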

\begin{proof}  Identity (\ref{LReq5}) can be written as
\begin{equation}
\Big\{1 + \sum_{2\le h(\beta)\le Q} f_{\beta}^+(\alpha)\Big\}
	+ \Big\{1 + \sum_{2\le h(\beta)\le Q} f_{\beta}^-(\alpha)\Big\} =
R_Q(\alpha) +L_Q(\alpha),\label{LReq14}
\end{equation}
and identity (\ref{LReq6}) can be written as
\begin{equation}
\Big\{1 + \sum_{2\le h(\beta)\le Q} f_{\beta}^+(\alpha)\Big\}
	- \Big\{1 + \sum_{2\le h(\beta)\le Q} f_{\beta}^-(\alpha)\Big\} =
R_Q(\alpha) - L_Q(\alpha).\label{LReq15}
\end{equation}
The statement of the corollary now plainly follows from
(\ref{LReq14}) and (\ref{LReq15}).
\end{proof}

The argument used to prove Theorem \ref{fbetacontfr} can be applied
to other functions indexed by points $\beta$ in $\Q/\Z$ and supported on
$\overline{I}(\beta', \beta'')$.  As an example we define a further
collection of
real valued functions $\left\{\chi_\beta:\beta\in\Q/\Z\right\}$ with
domain $\R/\Z$ as follows.  For $\beta = 0$ we set $\chi_0(x) = 1$ for all
$x$ in $\R/\Z$.  Then for $\beta\not= 0$ we set
\begin{equation*}
\chi_\beta(x)=\begin{cases}1&\text{if $x\in I(\beta',\beta'')$},\\
	\tfrac12 &\text{if $x = \beta'$ or $x = \beta''$},\\
	0&\text{if $x\notin \oI(\beta',\beta'')$}.\end{cases}
\end{equation*}
For each nonzero point $\beta$ in $\Q/\Z$ the function $\chi_{\beta}(x)$
is the
normalized characteristic function of the component interval
$I(\beta',\beta'').$  If $\beta_1$ and $\beta_2$ are distinct
points in $\Q/\Z$ with $h(\beta_1) = h(\beta_2)\geq 2$, then the open
component intervals $I(\beta_1',\beta_1'')$ and
$I(\beta_2',\beta_2'')$ are disjoint.  Thus for each positive integer $q$
the normalized characteristic function of the subset
\begin{equation*}
\bigcup_{h(\beta) = q} I(\beta',\beta'').
\end{equation*}
is the function $X_q:\R/\Z\rar\R$ defined by
\begin{equation}
X_q(x)=\sum_{h(\beta)=q}\chi_{\beta}(x).\label{Xqdef}
\end{equation}

\begin{theorem}\label{cfracchi}
Let $\alpha$ be an irrational point in $\R/\Z$, and for $n = 1, 2, \dots $
let $E_n$ be the collection of convergents and
intermediate convergents defined by {\rm (\ref{cfraceq10})}.  If $\beta$ is in
$\Q/\Z$ then
$\chi_{\beta}(\alpha) = 1$ if and only if $\beta$ belongs to $E_n$
for some $n = 1, 2, \dots $.  Moreover, the sum
\begin{equation*}
\sum_{h(\beta)\le Q} \chi_{\beta}(\alpha) = \sum_{q=1}^Q X_q(\alpha)
\end{equation*}
is exactly the number of convergents and intermediate convergents to
$\alpha$ with height less than or equal to $Q$.
\end{theorem}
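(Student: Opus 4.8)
The plan is to deduce both assertions from Theorem~\ref{fbetacontfr} together with the definition of $\chi_\beta$, rather than repeating the Farey-fraction analysis. First I would dispose of $\beta = 0$: here $\chi_0(\alpha) = 1$ for every $\alpha$, and $\beta = 0$ lies in $E_1(\alpha)$ (it is the term $m=1$ in (\ref{cfraceq10})), so the equivalence is immediate. For a nonzero $\beta$, both $\chi_\beta$ and $\fb$ are supported on the closed interval $\oI(\beta',\beta'')$, and the point $\beta$ splits the open interval as $I(\beta',\beta'') = I(\beta',\beta)\cup\{\beta\}\cup I(\beta,\beta'')$. Since $\alpha$ is irrational it cannot coincide with any of the rational points $\beta',\beta,\beta''$, so the normalization half-values in (\ref{fbdef2}) and in the definition of $\chi_\beta$ never occur at $\alpha$. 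Comparing the two definitions then gives, for irrational $\alpha$,
\[
\chi_\beta(\alpha) = 1 \iff \alpha\in I(\beta',\beta'') \iff \alpha\in I(\beta',\beta)\cup I(\beta,\beta'') \iff \fb(\alpha)\ne 0 .
\]
By Theorem~\ref{fbetacontfr} the last condition holds exactly when $\beta\in E_n(\alpha)$ for some $n$, which proves the first assertion.

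For the counting statement I would first record the equality
\[
\sum_{h(\beta)\le Q}\chi_\beta(\alpha) = \sum_{q=1}^Q X_q(\alpha),
\]
which is immediate from (\ref{Xqdef}) upon grouping the points $\beta$ by their height $h(\beta)=q$ and noting that the only point of height $1$ is $\beta = 0$. By the first assertion each summand $\chi_\beta(\alpha)$ equals $1$ when $\beta$ is a convergent or intermediate convergent to $\alpha$ and equals $0$ otherwise; since the sum runs over distinct points $\beta$ of $\Q/\Z$ with $h(\beta)\le Q$, it therefore counts precisely the number of distinct such fractions of height at most $Q$.

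The only genuine point to verify --- the nearest thing to an obstacle --- is that this count is unambiguous, i.e.\ that distinct index pairs $(n,m)$ in (\ref{cfraceq10}) produce distinct fractions, so that ``the number of convergents and intermediate convergents'' is well defined and each is counted once. This follows from height bookkeeping: within a fixed $E_n$ the $a_n$ fractions are distinct, with heights $mq_{n-1}+q_{n-2}$ for $1\le m\le a_n$ increasing up to $q_n$, while the smallest height occurring in $E_{n+1}$ is $q_n+q_{n-1} > q_n$, so the height ranges for different $n$ are disjoint. Hence no fraction is repeated, and the displayed sum equals the number of convergents and intermediate convergents to $\alpha$ of height at most $Q$. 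Everything past the appeal to Theorem~\ref{fbetacontfr} is thus elementary bookkeeping; the substantive work was already carried out there.
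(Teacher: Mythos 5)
Your proof is correct and rests on the same ingredients as the paper's. The paper's own proof establishes the first assertion by re-running the argument of Theorem~\ref{fbetacontfr} with $\chi_\beta$ in place of $f_\beta$, and the second by decomposing the sum via the parameters $M,N$ of (\ref{cfraceq15}) to obtain $\sum_{n=1}^{N-1}a_n+M$; your reduction of the first assertion to the \emph{statement} of Theorem~\ref{fbetacontfr} (via $\chi_\beta(\alpha)=1$ if and only if $f_\beta(\alpha)\neq 0$ for irrational $\alpha$) and your explicit height argument for the distinctness of the convergents and intermediate convergents are harmless variants that, if anything, make the counting step more self-contained.
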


\begin{proof}
The first assertion of the corollary follows as in the proof of Theorem
\ref{fbetacontfr}.  For the second assertion let
$M$ and $N$ be the unique positive integers such that
$$1 \le M \le a_N\quad\text{and}\quad Mq_{N-1} + q_{N-2} \le Q <
(M+1)q_{N-1} + q_{N-2},$$
and for each $m\in\{1, 2, \dots , a_N\}$ write
$$\beta_m =\frac{mp_{N-1} + p_{N-2}}{mq_{N-1} + q_{N-2}}.$$
Then we have
\begin{eqnarray*}
\sum_{h(\beta)\le Q} \chi_{\beta}(\alpha)&=&
\sum_{n=1}^{N-1}\sum_{\beta\in E_n} \chi_{\beta}(\alpha)
	+ \sum_{m=1}^M \chi_{\beta_m}(\alpha)\\
&=& \sum_{n=1}^{N-1} \sum_{\beta\in E_n} 1 + M\\
&=& \sum_{n=1}^{N-1} a_n + M.
\end{eqnarray*}
Plainly, this is the number of convergents and intermediate convergents to
$\alpha$ with height less than or equal to $Q$.
\end{proof}

Let $\qq$ be a subset of positive integers.  For each irrational point
$\alpha$ in $\R/\Z$ write
\begin{equation*}
\D(\alpha) = \{m q_{n-1} + q_{n-2}: 1 \le m \le a_n\ \text{and}\ 1 \le n \}
\end{equation*}
for the set of denominators from the collection of convergents and
intermediate convergents to $\alpha$.  Arguing as in the
proof of Theorem \ref{cfracchi}, we find that
\begin{equation}
\sum_{q\in\qq} X_q(\alpha) = \bigl|\D(\alpha)\cap \qq\bigr|.\label{exsubq0}
\end{equation}
If the set $\qq$ is such that the integral
\begin{equation}
\int_{\R/\Z} \Big\{\sum_{q\in\qq} X_q(x)\Big\}\ \dx \label{exsubq1}
\end{equation}
is finite then the integrand is finite for almost all $x$, and therefore
(\ref{exsubq0}) is finite for almost all irrational
points $\alpha$ in $\R/\Z$.  We conjecture that if the integral
(\ref{exsubq1}) is infinite then (\ref{exsubq0}) is
infinite for almost all irrational points $\alpha$.  The situation is
clarified by the following simple estimate.

\begin{lemma}\label{exsubq}
For each integer $q \ge 2$ we have
\begin{equation}
\int_{\R/\Z} X_q(x)\ \dx = \frac{2 \vphi(q)}{q^2}\Big\{\log q + \sum_{p|q}
\frac{\log p}{p-1}
	+ c_0\Big\} + O\Bigl(\frac{\log\log q}{q^2}\Bigr),\label{exsubq2}
\end{equation}
where $c_0$ is Euler's constant, and the sum on the right of
{\rm (\ref{exsubq2})} is over prime numbers $p$ that divide $q$.
\end{lemma}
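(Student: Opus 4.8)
The plan is to reduce the integral to a classical arithmetic sum and then evaluate that sum by M\"obius inversion. First I would note that for $\beta\neq 0$ the function $\chi_\beta$ is the normalized characteristic function of the component interval $I(\beta',\beta'')$, so its integral is simply the Haar measure of that interval, namely $\int_{\R/\Z}\chi_\beta(x)\,\dx = h(\beta')^{-1}h(\beta'')^{-1}$ (the endpoint values affect nothing, and $\beta',\beta''$ are consecutive in $\F_{h(\beta)-1}$, so the measure formula of Section~2 applies). Summing over $h(\beta)=q$ and writing $q'=h(\beta')$, $q''=h(\beta'')$, the definition (\ref{Xqdef}) gives $\int_{\R/\Z}X_q(x)\,\dx = \sum_{h(\beta)=q}(q'q'')^{-1}$.

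The next step is to recast this as a sum over a reduced residue system. By (\ref{hp1}) we have $q'+q''=q$ with $\gcd(q',q)=1$, so $q''=q-q'$. The standard Farey neighbour relation states that if $\beta=a/q$ and $\beta'=a'/q'$ then $aq'-a'q=1$, which forces $q'\equiv a^{-1}\pmod q$; since $a\mapsto a^{-1}$ permutes the reduced residues modulo $q$, the map $\beta\mapsto q'$ is a bijection onto $\{q':1\le q'\le q-1,\ \gcd(q',q)=1\}$. Hence $\int_{\R/\Z}X_q(x)\,\dx = \sum_{\gcd(q',q)=1}\bigl(q'(q-q')\bigr)^{-1}$, and partial fractions together with the symmetry $q'\leftrightarrow q-q'$ collapse this to $\tfrac{2}{q}\sum_{a\le q,\ \gcd(a,q)=1}a^{-1}$. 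Everything now reduces to the classical estimate for $S(q)=\sum_{a\le q,\ \gcd(a,q)=1}a^{-1}$.

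For $S(q)$ I would write $[\gcd(a,q)=1]=\sum_{d\mid\gcd(a,q)}\mu(d)$ to obtain $S(q)=\sum_{d\mid q}\tfrac{\mu(d)}{d}H_{q/d}$, where $H_m=\sum_{b\le m}b^{-1}$. A useful feature is that $d\mid q$ forces $q/d\in\Z$, so no floor-function error arises. Inserting the expansion $H_m=\log m + c_0 + \tfrac{1}{2m}+O(m^{-2})$ isolates the main term $\sum_{d\mid q}\tfrac{\mu(d)}{d}\log(q/d) + c_0\sum_{d\mid q}\tfrac{\mu(d)}{d}$, which I would evaluate using $\sum_{d\mid q}\mu(d)/d=\vphi(q)/q$ together with $\sum_{d\mid q}\mu(d)d^{-1}\log d = -\tfrac{\vphi(q)}{q}\sum_{p\mid q}\tfrac{\log p}{p-1}$; the second identity follows by logarithmically differentiating $\prod_{p\mid q}(1-p^{-s})$ at $s=1$. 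This produces exactly the bracketed main term of (\ref{exsubq2}).

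The main obstacle is the error term, since the crude term-by-term bound $O(2^{\omega(q)}/q)$ is far too weak. Two cancellations rescue the estimate: the $\tfrac{1}{2m}$ contribution equals $\tfrac{1}{2q}\sum_{d\mid q}\mu(d)=0$ for $q\ge 2$, and the surviving $O(m^{-2})$ error contributes $O\bigl(q^{-2}\sum_{d\mid q}|\mu(d)|d\bigr)=O\bigl(q^{-2}\prod_{p\mid q}(1+p)\bigr)$. Factoring $\prod_{p\mid q}(1+p)=\mathrm{rad}(q)\prod_{p\mid q}(1+p^{-1})$ and bounding $\mathrm{rad}(q)\le q$ and $\prod_{p\mid q}(1+p^{-1})\le\prod_{p\mid q}(1-p^{-1})^{-1}=q/\vphi(q)$, this error is $O(1/\vphi(q))$. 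Finally the classical Mertens--Landau bound $q/\vphi(q)=O(\log\log q)$ gives an error of $O(\log\log q/q)$ for $S(q)$, hence $O(\log\log q/q^2)$ after multiplying by $2/q$, which is precisely the claim (\ref{exsubq2}).
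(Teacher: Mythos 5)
Your proposal is correct and follows essentially the same route as the paper: reduce $\int X_q$ to $\sum_{(a,q)=1}\bigl(\oa(q-\oa)\bigr)^{-1}$ via the Farey neighbour relation $h(\beta')=\oa$, collapse it by partial fractions and the permutation $a\mapsto\oa$ to $\tfrac{2}{q}\sum_{(a,q)=1}a^{-1}$, and then apply M\"obius inversion with the expansion of the harmonic sum and the identity $-\sum_{d|q}\mu(d)d^{-1}\log d=\tfrac{\vphi(q)}{q}\sum_{p|q}\tfrac{\log p}{p-1}$. Your treatment of the error term (the vanishing of the $\tfrac{1}{2m}$ contribution and the bound $\sum_{d|q}|\mu(d)|d\le q\cdot q/\vphi(q)=O(q\log\log q)$) is more explicit than the paper's, which simply asserts the $O(\log\log q/q)$ bound.
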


\begin{proof}  Suppose that $\beta = a/q$, where $1 \le a < q$ and $(a,q)
= 1$.  Write $\oa$ for the unique
integer such that $1 \le \oa < q$ and $a\oa \equiv 1 \pmod q$.  We find
that $h(\beta') = \oa$ and
$h(\beta'') = q - \oa$, and therefore
\begin{align}
\int_{\R/\Z} X_q(x)\ \dx &= \sum_{\substack{\beta\in\Q/\Z\\h(\beta) = q}}
\frac{1}{h(\beta')h(\beta'')}
	 = \sum_{\substack{a=1\\(a,q)=1}}^q \frac{1}{\oa(q - \oa)}\nonumber\\
	&= \sum_{\substack{a=1\\(a,q) = 1}}^q \Bigl(\frac{1}{\oa q} + \frac{1}{(q
- \oa) q}\Bigr)\label{ds1}\\
	&= \sum_{\substack{a=1\\(a,q) = 1}}^q \frac{2}{a q}.\nonumber
\end{align}
Then using M\"obius inversion and well known estimates we have
\begin{align}
\sum_{\substack{a=1\\(a,q) = 1}}^q \frac{1}{a} &= \sum_{a=1}^q \frac{1}{a}
\sum_{\substack{d|q\\d|a}} \mu(d)
	= \sum_{d|q} \mu(d) \sum_{\substack{a=1 \\ d|a}}^q \frac{1}{a}
	= \sum_{d|q} \frac{\mu(d)}{d} \sum_{b=1}^{q/d} \frac{1}{b}\nonumber\\
	&= \sum_{d|q} \frac{\mu(d)}{d} \Big\{\log q - \log d + c_0 + \frac{d}{2q}
		+ O\Bigl(\frac{d^2}{q^2}\Bigr)\Big\}\label{ds2}\\
	&=  \frac{\vphi(q) \log q}{q} - \sum_{d|q} \frac{\mu(d) \log d}{d} +
\frac{c_0 \vphi(q)}{q}
		+ O\Bigl(\frac{\log\log q}{q}\Bigr).\nonumber
\end{align}
The statement of the lemma follows now by combining (\ref{ds1}),
(\ref{ds2}), and the basic identity
\begin{equation}
- \sum_{d|q} \frac{\mu(d)\log d}{d}  = \frac{\vphi(q)}{q} \sum_{p|q}
\frac{\log p}{p-1}.
\end{equation}
\end{proof}

It follows from the estimate (\ref{exsubq2}) that the integral
(\ref{exsubq1}) is infinite if and only if the series
\begin{equation}
\sum_{q\in\qq} \frac{\vphi(q) \log q}{q^2}\label{exsubq3}
\end{equation}
diverges.  Thus we state our conjecture in the following form.
\begin{conjecture}\label{exsubq4}
Let $\qq$ be a subset of positive integers.  Then the set $\D(\alpha)\cap\qq$
is infinite for almost all irrational points $\alpha$ in $\R/\Z$ if and
only if the series {\rm (\ref{exsubq3})} diverges.
\end{conjecture}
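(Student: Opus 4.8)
The plan is to recast the statement as a Borel--Cantelli dichotomy for the sets $A_q=\{\alpha\in\R/\Z: X_q(\alpha)=1\}$. Because the open intervals $I(\beta',\beta'')$ with $h(\beta)=q$ are pairwise disjoint, $X_q$ equals the indicator function of $A_q$ off the null set $\Q/\Z$, and by (\ref{exsubq0}) the set $\D(\alpha)\cap\qq$ is infinite exactly when $\alpha\in\limsup_{q\in\qq}A_q$. Since $|A_q|=\int_{\R/\Z}X_q\,\dx$, the discussion following Lemma \ref{exsubq} shows that $\sum_{q\in\qq}|A_q|$ diverges if and only if the series (\ref{exsubq3}) diverges. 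The convergence half of the conjecture is then simply the first Borel--Cantelli lemma, already recorded in the observation preceding Lemma \ref{exsubq}. Everything reduces to the divergence half: if $\sum_{q\in\qq}|A_q|=\infty$, then $\bigl|\limsup_{q\in\qq}A_q\bigr|=1$.

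For the divergence half I would run a second-moment argument. Set $S_Q(\alpha)=\sum_{q\in\qq,\,q\le Q}X_q(\alpha)$, so that $\int_{\R/\Z}S_Q\,\dx=\sum_{q\in\qq,\,q\le Q}|A_q|\to\infty$. By the Kochen--Stone form of the Chung--Erd\H{o}s inequality, the bound
$$\liminf_{Q\to\infty}\ \frac{\Bigl(\sum_{\substack{q\in\qq\\ q\le Q}}|A_q|\Bigr)^{2}}{\sum_{\substack{q,q'\in\qq\\ q,q'\le Q}}|A_q\cap A_{q'}|} > 0$$
would yield $\bigl|\limsup_{q\in\qq}A_q\bigr|>0$. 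The diagonal terms $q=q'$ contribute $\sum_{q\le Q}|A_q|=o\bigl((\sum_{q\le Q}|A_q|)^2\bigr)$, so the task is to control the off-diagonal correlations $|A_q\cap A_{q'}|$ with $q\ne q'$. Here the Farey structure is a genuine help: for $q<q'$, Lemma \ref{farintlem} shows that each height-$q'$ interval $I(\beta_2',\beta_2'')$ is either contained in a single height-$q$ interval or disjoint from all of them, so $A_q\cap A_{q'}$ is exactly the union of the height-$q'$ intervals lying inside $A_q$, whence $|A_q\cap A_{q'}|=\int_{A_q}X_{q'}\,\dx$.

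The crux is a \emph{quasi-independence} estimate asserting that the height-$q'$ intervals are, on average, equidistributed relative to the height-$q$ sets $A_q$, namely
$$\sum_{\substack{q,q'\in\qq\\ q,q'\le Q}} |A_q\cap A_{q'}| \ \ll\ \Bigl(\sum_{\substack{q\in\qq\\ q\le Q}} |A_q|\Bigr)^{2}.$$
Since $|I(\beta',\beta'')|=\bigl(\oa(q-\oa)\bigr)^{-1}$ depends on the residue $\oa$ inverse to $a$ modulo $q$, evaluating $\int_{A_q}X_{q'}\,\dx$ amounts to counting height-$q'$ fractions that fall into prescribed height-$q$ Farey intervals, weighted by reciprocals of modular inverses; the ensuing error terms are governed by incomplete Kloosterman-type sums. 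Summing these uniformly over all $q,q'\le Q$ is, I expect, the main obstacle: it is the same overlap/GCD-correlation difficulty that kept the Duffin--Schaeffer problem open for decades, now aggravated by the asymmetric, $\oa$-dependent lengths of the intervals $I(\beta',\beta'')$. I would first attempt the estimate for restricted families of $\qq$ (for example $\qq$ supported on primes, or on $q$ with $\vphi(q)/q$ bounded below), where $\oa$ ranges more uniformly and the sums simplify.

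Finally, the Chung--Erd\H{o}s step gives only positive measure, so I would upgrade to full measure by localizing. A Kolmogorov zero--one law is not directly available: $\limsup_{q\in\qq}A_q$ is not a tail event in the partial quotients $a_1,a_2,\dots$, since changing $a_1$ alters every denominator $q_n$. Instead, if the second-moment estimate is established uniformly inside every Farey interval $I(\gamma_1,\gamma_2)$ --- with $A_q$ replaced by $A_q\cap I(\gamma_1,\gamma_2)$ and the measures renormalized --- then $\limsup_{q\in\qq}A_q$ has conditional measure bounded below by a fixed positive constant in every such interval, and a Lebesgue density argument forces $\bigl|\limsup_{q\in\qq}A_q\bigr|=1$. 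This localization is precisely the mechanism behind the Beresnevich--Velani ubiquity framework, which one could alternatively invoke to pass from divergence to full measure in one step. Either way, the analytic heart of the proof remains the correlation estimate above, and that is where I expect the real difficulty to lie.
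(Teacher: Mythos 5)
The first thing to say is that the paper does not prove this statement: it is stated as Conjecture~\ref{exsubq4} and left open, so there is no proof of the authors' to compare yours against. What the paper does establish is exactly the easy half that you also dispose of: if the series (\ref{exsubq3}) converges, then by Lemma~\ref{exsubq} the integral (\ref{exsubq1}) is finite, hence $|\D(\alpha)\cap\qq|=\sum_{q\in\qq}X_q(\alpha)$ is finite almost everywhere by (\ref{exsubq0}). Your recasting of the divergence half as a Borel--Cantelli problem for the sets $A_q=\{X_q=1\}$ is correct, as is the observation via Lemma~\ref{farintlem} that $|A_q\cap A_{q'}|=\int_{A_q}X_{q'}\ \dx$ for $q<q'$.

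However, your proposal is a research plan rather than a proof, and you say as much. The entire content of the conjecture is concentrated in the quasi-independence estimate
\begin{equation*}
\sum_{\substack{q,q'\in\qq\\ q,q'\le Q}}|A_q\cap A_{q'}|\ \ll\ \Bigl(\sum_{\substack{q\in\qq\\ q\le Q}}|A_q|\Bigr)^{2},
\end{equation*}
which you do not prove and which is precisely the open content of the conjecture. Nothing in the paper --- the orthogonality of the $f_\beta$, the martingale structure, or Lemma~\ref{exsubq} --- gives second-moment control of the $X_q$: the quantity $\int_{\R/\Z} X_qX_{q'}\ \dx$ is a genuine correlation between two levels of the Farey dissection, and evaluating it requires counting height-$q'$ fractions inside height-$q$ intervals whose lengths depend on modular inverses, exactly the Kloosterman-type difficulty you flag. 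The zero--one upgrade is also not free: as you note, $\limsup_{q\in\qq}A_q$ is not a tail event in the partial quotients, and the self-similarity of the Farey decomposition inside a component interval does not carry the set $\qq$ into itself, so the localized second-moment bound needed for your density argument is strictly stronger than the global one. In short: the reduction is sound and the obstacle is honestly identified, but the statement remains unproven by your argument, exactly as it does in the paper, where the only supporting evidence offered is the analogous theorem of Erd\"os for principal convergents.
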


\noindent We note that the analogous statement for the principal
convergents to almost all $\alpha$ is a well known theorem of
Erd\"os \cite{erdos1970}.  Related questions of Diophantine approximation
are considered in \cite[Chapter 2]{harmon1998} and
\cite{vaaler1978}.


\section{Sequences of Martingale Differences}

Let $n\mapsto\beta_n$ be a bijective map from the set $\N$ of positive
integers onto the group $\Q/\Z$.  Then we say
that $\beta_1, \beta_2, \dots $ is an {\it enumeration} of the elements of
$\Q/\Z$.  For each positive integer
$n$ we define $\B_n$ to be the finite $\sigma$-algebra of subsets of
$\R/\Z$ generated by the components of the open set
\begin{equation}
\R/\Z\setminus\{\beta_1, \beta_2, \dots , \beta_n\}\label{sigmaalg1}
\end{equation}
together with the collection of singleton sets $\{\beta_1\}, \{\beta_2\},
\dots , \{\beta_n\}$.
Thus a function $g:\R/\Z\rar\C$ is $\B_n$-measurable if and only if it is
constant on each component of the open set
(\ref{sigmaalg1}).
Clearly we have $\B_1\subseteq\B_2\subseteq \cdots $.

In this section we determine a simple arithmetic condition that classifies
all orderings $\beta_1, \beta_2, \dots $ such that
$f_{\beta_n}$ is $\B_n$-measurable for each $n = 1, 2, \dots $ and the
sequence of functions and $\sigma$-algebras
\begin{equation}
\left\{(f_{\beta_n}, \B_n): n = 1, 2, \dots \right\}\label{marteq9}
\end{equation}
forms a sequence of martingale differences.  The martingale differences
which arise from this construction are a special case of
a general class of such functions considered by R.~F.~Gundy
\cite{gundy1966}.  These observations allow us to exploit results
from the theory of martingales to obtain metric theorems about the
continued fraction expansion of almost all real numbers.

Again let $\beta_1, \beta_2, \dots $ be an enumeration of the elements of
$\Q/\Z$.  We say that this enumeration
is {\it admissible} if it satisfies the following three conditions:
\begin{itemize}
\item[(i)] $\beta_1 = 0$,
\item[(ii)] if $m \ge 2$ and $\beta_k = \beta_m'$ then $k < m$,
\item[(iii)] if $m \ge 2$ and $\beta_l = \beta_m''$ then $l < m$.
\end{itemize}
It follows easily that an admissible enumeration of $\Q/\Z$ must begin
with either $0, \frac12, \frac13, \dots $,
or with $0, \frac12, \frac23, \dots $.

Suppose that $\beta_1, \beta_2, \dots $ is an enumeration
of $\Q/\Z$ such that $n\mapsto h(\beta_n)$ is nondecreasing.  Then
$\beta_1 = 0$, and using (\ref{hp1}) we find that
if $\beta_n$ is nonzero then $h(\beta_n') < h(\beta_n)$ and $h(\beta_n'')
< h(\beta_n)$.  It follows that such an enumeration
is admissible.  As an example, if we enumerate $\Q/\Z$ as
\begin{equation}
\tfrac01,\tfrac12,\tfrac13,\tfrac23,\tfrac14,\tfrac34,\tfrac15,\tfrac25,\tfrac35,\tfrac45,\tfrac16,\tfrac56,
	\tfrac17,\tfrac27,\tfrac37,\tfrac47,\tfrac57,\tfrac67, \dots ,\label{farey}
\end{equation}
then $n\mapsto h(\beta_n)$ is nondecreasing and the enumeration is
admissible.  Thus admissible enumerations of $\Q/\Z$
certainly exist.

The admissible enumeration (\ref{farey}) is constructed by arranging the
points of $\Q/\Z$ in order of increasing height, and
then ordering points of equal height by ordering their coset
representatives in $(0,1)$.  This construction
also leads to an admissible ordering for other naturally occurring
functions on $\Q/\Z$.  We describe such an admissible
enumeration associated to the Stern-Brocot tree (see \cite{graham1994}).
If $\beta$ is a rational number but not an integer
then $\beta$ has exactly two finite continued fraction expansions.  One
expansion has the form
\begin{equation}
\beta = [a_0; a_1, a_2, \dots , a_{N-1}, a_N],\quad\text{where}\ a_N \ge 2,\label{extra1}
\end{equation}
and then the other expansion is
\begin{equation*}
\beta = [a_0; a_1, a_2, \dots , a_{N-1}, a_N-1, 1].
\end{equation*}
We define
\begin{equation*}
s(\beta) = a_1 + a_2 + \cdots + a_N,
\end{equation*}
so that $s(\beta)$ is the sum of the partial quotients (other than $a_0$)
in both expansions of $\beta$.  If $n$ is an integer
we define $s(n) = 1$.  It is clear that $s$ is constant on cosets of
$\Q/\Z$ and thus it is well defined as a map
$s:\Q/\Z\rar \N$.  Clearly $s(\beta) = 1$ if and only if $\beta = 0$ in
$\Q/\Z$.  

Suppose that $\beta$ is rational number but not an integer, and $\beta$ has the finite continued fraction 
expansion (\ref{extra1}).  If $N$ is odd, then arguing as in the proof of Lemma \ref{farey0}, we get
\begin{equation*}
s(\beta') + a_N = s(\beta)\quad\text{and}\quad s(\beta'') + 1 = s(\beta).
\end{equation*}
If $N$ is even we find that
\begin{equation*}
s(\beta') + 1 = s(\beta)\quad\text{and}\quad s(\beta'') + a_N = s(\beta).
\end{equation*}
In particular, these identities show that if $\beta$ is a nonzero point in $\Q/\Z$, then $s(\beta') < s(\beta)$ and 
$s(\beta'') < s(\beta)$.  It follows that if $\beta_1, \beta_2, \dots $ is an enumeration of $\Q/\Z$
such that $n\mapsto s(\beta_n)$ is nondecreasing,
then the enumeration is admissible.  For example, the enumeration
\begin{equation}
\tfrac01,\tfrac12,\tfrac13,\tfrac23,\tfrac14,\tfrac25,\tfrac35,\tfrac34,\tfrac15,\tfrac27,\tfrac38,\tfrac37,\tfrac47,
	\tfrac58,\tfrac57,\tfrac45, \dots ,\label{sternbrocot}
\end{equation}
that corresponds to the ordering induced by the Stern-Brocot tree, is such
that $n\mapsto s(\beta_n)$
is nondecreasing.  Hence (\ref{sternbrocot}) is an admissible enumeration
of $\Q/\Z$, but distinct from (\ref{farey}).
In particular, the map $n\mapsto h(\beta_n)$ fails to be nondecreasing for
the enumeration (\ref{sternbrocot}).

\begin{theorem}\label{admordthm1}
Let $\beta_1, \beta_2, \dots $ be an enumeration of $\Q/\Z$.  Then
$\beta_1, \beta_2, \dots $ is admissible if and only if
for each positive integer $n$ the function $f_{\beta_n}$ is
$\B_n$-measurable.
\end{theorem}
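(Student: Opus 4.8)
The plan is to reduce the statement to a purely combinatorial condition on the location of the three ``breakpoints'' of $f_{\beta_n}$, and then to match this against the three defining conditions of admissibility. Recall from the text that a function is $\B_n$-measurable precisely when it is constant on each component of the open set $\R/\Z\setminus\{\beta_1,\dots,\beta_n\}$. Examining the explicit formula (\ref{fbdef2}), for $\beta_n\neq 0$ the function $f_{\beta_n}$ is a step function supported on $\oI(\beta_n',\beta_n'')$ that equals $h(\beta_n')$ on $I(\beta_n',\beta_n)$, equals $-h(\beta_n'')$ on $I(\beta_n,\beta_n'')$, and vanishes elsewhere; thus its only jumps occur across the three points $\beta_n'$, $\beta_n$, and $\beta_n''$. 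Consequently $f_{\beta_n}$ is $\B_n$-measurable if and only if each of these three points lies in $\{\beta_1,\dots,\beta_n\}$. Since $\beta_n$ is automatically present, and since (\ref{hp1}) gives $h(\beta_n')<h(\beta_n)$ and $h(\beta_n'')<h(\beta_n)$ so that neither $\beta_n'$ nor $\beta_n''$ can coincide with $\beta_n$, this is equivalent to requiring $\beta_n',\beta_n''\in\{\beta_1,\dots,\beta_{n-1}\}$, which is exactly conditions (ii) and (iii) read at the index $m=n$. Proving the two implications of this equivalence is what the two directions of the theorem require.

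For the forward implication I would assume the enumeration admissible and verify $\B_n$-measurability for every $n$. The case $n=1$ uses only condition (i): there $\beta_1=0$ and $f_0\equiv 1$ is constant, hence trivially measurable. For $n\geq 2$, admissibility forces $\beta_n\neq 0$ (since $\beta_1=0$ and the enumeration is a bijection), so $\beta_n'$ and $\beta_n''$ are defined, and conditions (ii), (iii) place both of them among $\beta_1,\dots,\beta_{n-1}$. Then every component of $\R/\Z\setminus\{\beta_1,\dots,\beta_n\}$ lies entirely inside one of the three regions on which $f_{\beta_n}$ is constant, and measurability follows from the reduction above.

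For the converse I would assume $f_{\beta_n}$ is $\B_n$-measurable for all $n$ and recover admissibility. Condition (i) comes from $n=1$: the set $\R/\Z\setminus\{\beta_1\}$ is a single arc, and if $\beta_1\neq 0$ then $f_{\beta_1}$ takes the positive value $h(\beta_1')$ on $I(\beta_1',\beta_1)$ and the negative value $-h(\beta_1'')$ on $I(\beta_1,\beta_1'')$, so it is not constant on that arc, a contradiction; hence $\beta_1=0$. For $m\geq 2$ we again have $\beta_m\neq 0$, so $\beta_m',\beta_m''$ exist; if, say, $\beta_m'$ did not belong to $\{\beta_1,\dots,\beta_m\}$ it would be an interior point of some component $I$, yet $f_{\beta_m}$ jumps from $0$ to $h(\beta_m')$ across $\beta_m'$, making $f_{\beta_m}$ nonconstant on $I$, again a contradiction. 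Thus $\beta_m'\in\{\beta_1,\dots,\beta_m\}$, and being of smaller height than $\beta_m$ it equals some $\beta_k$ with $k<m$, which is (ii); the identical argument at $\beta_m''$ gives (iii).

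The only genuinely delicate step is this last local analysis: one must confirm that $f_{\beta_n}$ really is discontinuous across each of $\beta_n'$ and $\beta_n''$, i.e.\ that it passes from the value $0$ (just outside the support arc) to the nonzero value $h(\beta_n')$ or $-h(\beta_n'')$ (just inside), so that no component having such a point in its interior can be a set of constancy. This requires keeping track of the cyclic orientation of the arc $\oI(\beta_n',\beta_n'')$ and checking the degenerate configurations (for instance $\beta_n=\tfrac12$, where $\beta_n'=\beta_n''=0$); none of these cause difficulty, but they are where care is needed.
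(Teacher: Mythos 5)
Your proof is correct and follows essentially the same route as the paper's: both directions reduce $\B_n$-measurability of $f_{\beta_n}$ to the requirement that the three breakpoints $\beta_n'$, $\beta_n$, $\beta_n''$ all lie in $\{\beta_1,\dots,\beta_n\}$, which (since $h(\beta_n')<h(\beta_n)$ and $h(\beta_n'')<h(\beta_n)$) matches conditions (ii) and (iii), with the $n=1$ case giving (i). The paper packages the same observation as the statement that the $\sigma$-algebra $\A_n$ generated by $\{\beta_n,\beta_n',\beta_n''\}$ is the smallest one making $f_{\beta_n}$ measurable, but the content is identical to your breakpoint analysis.
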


\begin{proof}  First assume that $\beta_1, \beta_2, \dots $ is admissible.
 As $\beta_1 = 0$ it follows that
$f_{\beta_1} = f_0$ is constant.  Hence it is trivial that $f_{\beta_1}$
is $\B_1$-measurable.  Now suppose that
$n \ge 2$.  By hypothesis both $\beta_n'$ and $\beta_n''$ are elements of
the set $\{\beta_1, \beta_2, \dots , \beta_n\}$.
Therefore $\B_n$ contains the sub-$\sigma$-algebra $\A_n$ generated by the
components of the open set
$$\R/\Z\setminus\{\beta_n, \beta_n', \beta_n''\}$$
and the singleton sets $\{\beta_n\}, \{\beta_n'\}, \{\beta_n''\}$.  From the
definition (\ref{fbdef2}) it follows that $f_{\beta_n}$ is
$\A_n$-measurable and hence also $\B_n$-measurable.

Now assume that the function $f_{\beta_n}$ is $\B_n$-measurable for each
positive integer $n$.  In particular
the function $f_{\beta_1}$ must be constant on each component of the open
set $\R/\Z\setminus\{\beta_1\}$.  That is,
$f_{\beta_1}$ must be constant on $\R/\Z\setminus\{\beta_1\}$.  Hence
$f_{\beta_1}$ is constant almost
everywhere and therefore $\beta_1 = 0$.  Now assume that $n \ge 2$.  Then
$f_{\beta_n}$ is constant on
each of the open sets $I(\beta_n',\beta_n)$ and $I(\beta_n,\beta_n'')$.
Therefore the $\sigma$-algebra $\A_n$ defined
above is the smallest $\sigma$-algebra for which $f_{\beta_n}$ is
measurable.  It follows that $\A_n\subseteq \B_n$ and
therefore $\beta_n'$ and $\beta_n''$ must be elements of the set
$\{\beta_1, \beta_2, \dots , \beta_n\}$.  That is, if $\beta_k = \beta_n'$
then $k < n$, and
if $\beta_l = \beta_n''$ then $l < n$.  This shows that the enumeration
$\beta_1, \beta_2, \dots $ is admissible.
\end{proof}

\begin{lemma}\label{admordlem1}
Let $\beta_1, \beta_2, \dots $ be an admissible enumeration of $\Q/\Z$.
If $\beta_l$ and $\beta_m$ are distinct nonzero
points in $\Q/\Z$ such that $\beta_m$ is contained in
$I(\beta_l',\beta_l'')$, then $l < m$.
\end{lemma}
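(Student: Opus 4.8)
The plan is to argue by strong induction on the height $h(\beta_m)$ of the inner point, using admissibility to pass from a point to its Farey neighbours and Lemma \ref{farintlem} to descend through the Farey structure. First I would record the basic height comparison. Since $\beta_m$ lies in the open interval $I(\beta_l',\beta_l'')$, which is a component of $\R/\Z\setminus\F_{h(\beta_l)-1}$, and since $\beta_l$ is the unique point of height $h(\beta_l)$ in that interval (it is the mediant of $\beta_l'$ and $\beta_l''$), every point of the open interval other than $\beta_l$ has height strictly larger than $h(\beta_l)$. Thus $h(\beta_l)<h(\beta_m)$, and in particular Lemma \ref{farintlem} applies with the smaller-height point $\beta_l$ playing the role of $\beta$ and $\beta_m$ the role of $\gamma$.

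The heart of the argument is the inductive step. If $\beta_l\in\{\beta_m',\beta_m''\}$, then admissibility conditions (ii) and (iii) immediately give $l<m$; this also serves as the base case, since the descent below always lowers the height and so the minimal-height instance must be of this form. Otherwise $\beta_l\notin\{\beta_m',\beta_m''\}$. Because $\beta_m\in I(\beta_l',\beta_l'')\cap I(\beta_m',\beta_m'')$, the two open intervals are not disjoint, so Lemma \ref{farintlem} forces either $I(\beta_m',\beta_m'')\subseteq I(\beta_l',\beta_l)$ or $I(\beta_m',\beta_m'')\subseteq I(\beta_l,\beta_l'')$. I would treat the first case, the second being symmetric. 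The containment gives $\beta_l'\le\beta_m'<\beta_m''\le\beta_l$ for suitable coset representatives; since $\beta_m''\ne\beta_l$ and $\beta_m''>\beta_m'\ge\beta_l'$, the point $\beta_m''$ in fact lies in the open interval $I(\beta_l',\beta_l)\subseteq I(\beta_l',\beta_l'')$. Moreover $\beta_m''$ is nonzero, because $I(\beta_l',\beta_l'')$ contains no point of $\F_1=\{0\}$, and by (\ref{hp1}) its height satisfies $h(\beta_m'')<h(\beta_m)$.

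With this in hand I would close the induction by chaining indices. Applying the inductive hypothesis to the pair $(\beta_l,\beta_m'')$, which is legitimate because $h(\beta_m'')<h(\beta_m)$ and because $\beta_m''$ is a nonzero point of $I(\beta_l',\beta_l'')$ distinct from $\beta_l$, shows that $\beta_l$ precedes $\beta_m''$ in the enumeration; admissibility condition (iii) shows that $\beta_m''$ precedes $\beta_m$. Together these yield $l<m$. The symmetric case uses $\beta_m'$ together with condition (ii).

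The main obstacle I anticipate is the endpoint bookkeeping in the inductive step: Lemma \ref{farintlem} only places $\beta_m'$ and $\beta_m''$ in a closed subinterval of $\oI(\beta_l',\beta_l'')$, and one must verify that the \emph{correct} neighbour ($\beta_m''$ in the first case, $\beta_m'$ in the second) actually falls in the open interval, is distinct from $\beta_l$, and is nonzero, so that both the inductive hypothesis and the hypotheses of Lemma \ref{farintlem} may legitimately be reapplied to it. Checking that this descent strictly decreases the height, and hence terminates in the admissibility base case, is exactly what makes the induction well founded.
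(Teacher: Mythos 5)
Your proof is correct, but it organizes the descent differently from the paper. You induct on the height $h(\beta_m)$ of the inner point: either $\beta_l$ is already a Farey neighbour of $\beta_m$ (in which case conditions (ii)/(iii) finish at once), or Lemma \ref{farintlem} places $I(\beta_m',\beta_m'')$ inside one half of $I(\beta_l',\beta_l'')$ and you replace $\beta_m$ by the appropriate neighbour $\beta_m''$ (or $\beta_m'$), which by (\ref{hp1}) has strictly smaller height and still lies in the open interval; you then chain $l < k < m$ through the inductive hypothesis and one admissibility condition. The paper avoids any induction on height: it takes $m$ to be the \emph{smallest} index with $\beta_m \in I(\beta_l',\beta_l'')$ (which suffices, since all other indices in that set are larger), and then observes that admissibility forces the indices of $\beta_m'$ and $\beta_m''$ below $m$, so by minimality neither neighbour can lie in the open interval; combined with the containment from Lemma \ref{farintlem} this pins down $\beta_m' = \beta_l'$ and $\beta_m'' = \beta_l$ (or the symmetric identification), and a single application of condition (iii) (or (ii)) gives $l < m$. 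The paper's choice of induction parameter — the enumeration index rather than the height — collapses your recursive descent to one step; your version is perhaps closer to the Stern--Brocot tree picture and requires the extra (but correctly executed) bookkeeping that the intermediate neighbour is nonzero, distinct from $\beta_l$, and interior to $I(\beta_l',\beta_l'')$ so the hypotheses can be reapplied. Both arguments rest on the same two ingredients, Lemma \ref{farintlem} and the admissibility conditions, so the difference is one of arrangement rather than substance.
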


\begin{proof}  Let $h(\beta_l) = Q \ge 2$ and define
\begin{equation*}
M(\beta_l) = \{n\in\N:\beta_n\in I(\beta_l',\beta_l'')\}.
\end{equation*}
Then $M(\beta_l)$ is not empty and we may clearly assume that $m$ is the
smallest positive integer in $M(\beta_l)$.  Now
$I(\beta_l',\beta_l'')$ is a component interval in
$\R/\Z\setminus\F_{Q-1}$ and therefore $h(\beta_l) = Q \le h(\beta_m)$.
As
$$\beta_m \in I(\beta_l',\beta_l'')\cap I(\beta_m',\beta_m''),$$
it follows from Lemma \ref{farintlem} that either
\begin{equation*}
I(\beta_m',\beta_m'')\subseteq I(\beta_l',\beta_l)\quad\text{or}\quad
I(\beta_m',\beta_m'')\subseteq I(\beta_l,\beta_l'').
\end{equation*}
Assume that the first alternative
\begin{equation}
I(\beta_m',\beta_m'')\subseteq I(\beta_l',\beta_l)\label{IconI}
\end{equation}
holds.  Write $\beta_m' = \beta_j$
and $\beta_m'' = \beta_k$.  As the enumeration $\beta_1, \beta_2, \dots $
is admissible we have $j < m$ and $k < m$.
Therefore neither $\beta_m'$ nor $\beta_m''$ can belong to
$I(\beta_l',\beta_l)$.  From (\ref{IconI}) we conclude that
\begin{equation}
\beta_m' = \beta_l'\quad\text{and}\quad \beta_m'' = \beta_l.\label{IconI2}
\end{equation}
Since $\beta_1, \beta_2, \dots $ is admissible, the second identity in
(\ref{IconI2}) implies that $l < m$.
If the second alternative
\begin{equation*}
I(\beta_m',\beta_m'')\subseteq I(\beta_l,\beta_l'')
\end{equation*}
holds then the inequality $l < m$ follows in a similar manner.  This
proves the lemma.
\end{proof}

Next we recall that (\ref{marteq9}) is a sequence of martingale
differences if each function $f_{\beta_n}$ is
$\B_n$-measurable, and if for $n = 1, 2, \dots $ the conditional
expectation of $f_{\beta_{n+1}}$ with respect to
$\B_n$ is $0$ almost everywhere.  In the present setting, the conditional
expectation of $f_{\beta_{n+1}}$ with respect to
$\B_n$ is $0$ almost everywhere if and only if
\begin{equation}
\int_J f_{\beta_{n+1}}(x)\ \dx = 0 \label{condex}
\end{equation}
for each component $J$ of the open set $\R/\Z\setminus\{\beta_1, \beta_2,
\dots , \beta_n\}.$

\begin{theorem}\label{martdif1}  Let $\beta_1, \beta_2, \dots $ be an
admissible enumeration of $\Q/\Z$.  Then
the sequence of functions and $\sigma$-algebras
\begin{equation}
\left\{(f_{\beta_n}, \B_n): n = 1, 2, \dots \right\}\label{marteq10}
\end{equation}
is a sequence of martingale differences.
\end{theorem}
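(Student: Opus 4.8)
The plan is to verify the two defining properties of a martingale difference sequence: first that each $f_{\beta_n}$ is $\B_n$-measurable, and second that the conditional expectation of $f_{\beta_{n+1}}$ with respect to $\B_n$ vanishes almost everywhere. The first property is immediate from Theorem \ref{admordthm1}, since the enumeration is admissible by hypothesis. The entire weight of the proof therefore falls on establishing the conditional expectation condition, which by the reformulation in (\ref{condex}) amounts to showing that
\begin{equation*}
\int_J f_{\beta_{n+1}}(x)\ \dx = 0
\end{equation*}
for every component $J$ of the open set $\R/\Z\setminus\{\beta_1, \beta_2, \dots, \beta_n\}$.

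First I would fix $n$ and a component interval $J$ of $\R/\Z\setminus\{\beta_1, \dots, \beta_n\}$, and recall from (\ref{fbdef2}) that $f_{\beta_{n+1}}$ is supported on the closed interval $\oI(\beta_{n+1}', \beta_{n+1}'')$. The key structural fact to exploit is the relationship between $J$ and this support interval. If $J$ is disjoint from $I(\beta_{n+1}', \beta_{n+1}'')$ then the integral is trivially zero, so the substance lies in the overlapping case. Here I would invoke Lemma \ref{admordlem1}: since the enumeration is admissible, no $\beta_j$ with $j \le n$ can lie in the interior $I(\beta_{n+1}', \beta_{n+1}'')$, because such a point would force $n+1 < j$ by that lemma, a contradiction. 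Consequently the open interval $I(\beta_{n+1}', \beta_{n+1}'')$ contains none of the partitioning points $\beta_1, \dots, \beta_n$ in its interior, so it is entirely contained in a single component $J$ of $\R/\Z\setminus\{\beta_1, \dots, \beta_n\}$.

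The conclusion then follows quickly. For the unique component $J$ containing the support, the integral of $f_{\beta_{n+1}}$ over $J$ equals its integral over all of $\R/\Z$, which is $0$ by (\ref{fbintegral}) since $\beta_{n+1} \ne 0$ (indeed $\beta_1 = 0$ is the only zero point and $n+1 \ge 2$). For every other component $J$, the function $f_{\beta_{n+1}}$ vanishes on $J$ outside a set of measure zero, so the integral is again $0$. This handles all components and establishes (\ref{condex}).

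I expect the main obstacle to be the careful bookkeeping that shows the support interval lies inside a single component. The crucial point is that admissibility guarantees $\beta_{n+1}'$ and $\beta_{n+1}''$ themselves already appear among $\beta_1, \dots, \beta_n$ (so they are endpoints of components, not interior points), while Lemma \ref{admordlem1} rules out any other indexed point landing strictly inside $I(\beta_{n+1}', \beta_{n+1}'')$. Once this containment is secured, the vanishing of the integral is a direct appeal to (\ref{fbintegral}), and the measurability half is a citation of Theorem \ref{admordthm1}; no genuine computation remains.
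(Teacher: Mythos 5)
Your proof is correct and follows essentially the same route as the paper: admissibility places $\beta_{n+1}'$ and $\beta_{n+1}''$ among $\beta_1, \dots, \beta_n$, Lemma \ref{admordlem1} excludes the remaining points from $I(\beta_{n+1}', \beta_{n+1}'')$, so the support interval sits inside (indeed is) a single component and the integral over every component vanishes. The only cosmetic differences are that the paper treats $\beta_1 = 0$ separately (Lemma \ref{admordlem1} is stated only for nonzero points, though the exclusion of $0$ is immediate) and evaluates the integral directly from (\ref{fbdef2}) rather than citing (\ref{fbintegral}); neither affects correctness.
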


\begin{proof}  Let $n$ be a positive integer.  Then $\beta_{n+1}\not= 0$
and by hypothesis the points $\beta_{n+1}'$ and
$\beta_{n+1}''$ are contained in the set $\{\beta_1, \beta_2, \dots ,
\beta_n\}$.  Obviously $\beta_1 = 0$ is not
contained in the open set $I(\beta_{n+1}',\beta_{n+1}'')$.  If $2 \le m
\le n$
then by Lemma \ref{admordlem1} the point $\beta_m$ is not contained in
$I(\beta_{n+1}',\beta_{n+1}'')$.  It follows that
$I(\beta_{n+1}',\beta_{n+1}'')$ is a component of the open set
\begin{equation}
\R/\Z\setminus\{\beta_1, \beta_2, \dots , \beta_n\}.\label{sigmaalg2}
\end{equation}
Using the definition (\ref{fbdef2}) we find that
\begin{equation*}
\int_{I(\beta_{n+1}',\beta_{n+1}'')} f_{\beta_{n+1}}(x)\ \dx = 0.
\end{equation*}
As $f_{\beta_{n+1}}$ is supported on $\oI(\beta_{n+1}',\beta_{n+1}'')$, it
follows that (\ref{condex}) also holds whenever
$J \not= I(\beta_{n+1}',\beta_{n+1}'')$ is any other component of
(\ref{sigmaalg2}).  This proves the theorem.
\end{proof}

R.~F.~Gundy \cite{gundy1966} investigated a general class of martingale
differences called $H$-systems.  If
$\beta_1, \beta_2, \dots $ is an admissible enumeration of $\Q/\Z$, then
it follows from Theorem \ref{comporththm}
and \cite[Proposition 1.1]{gundy1966} that (\ref{marteq10}) is an example
of an $H$-system.  The following theorem
is \cite[Theorem 2.1(a)]{gundy1966} applied to the sequence
(\ref{marteq10}).

\begin{theorem}\label{cfserrep0}
Let $F:\R/\Z\rar \R\cup\{\pm\infty\}$ be a Borel measurable function that
is finite almost everywhere and let
$\beta_1, \beta_2, \dots $ be an admissible enumeration of $\Q/\Z$.  Then
there exist real numbers
$\{c(\beta_n): n = 1, 2, \dots \}$ such that
\begin{equation*}
\lim_{N\rar\infty} \sum_{n=1}^N c(\beta_n)f_{\beta_n}(x) =
F(x)\label{gundyeq2}
\end{equation*}
for almost all $x$ in $\R/\Z$.
\end{theorem}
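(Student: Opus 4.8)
The plan is to verify that the sequence (\ref{marteq10}) satisfies the hypotheses of the convergence theorem of Gundy, and then to invoke that theorem directly. Because the statement to be proved is precisely \cite[Theorem 2.1(a)]{gundy1966} transported to our setting, essentially all of the work lies in confirming that (\ref{marteq10}) is an $H$-system in Gundy's sense; once that is established the conclusion is immediate.

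First I would record that, since $\beta_1, \beta_2, \dots$ is an admissible enumeration, Theorem \ref{martdif1} guarantees that $\{(f_{\beta_n}, \B_n): n = 1, 2, \dots\}$ is a sequence of martingale differences. That is, each $f_{\beta_n}$ is $\B_n$-measurable and the conditional expectation of $f_{\beta_{n+1}}$ with respect to $\B_n$ vanishes almost everywhere. This is the first defining property of an $H$-system and requires no new argument.

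Next I would verify the completeness requirement. Theorem \ref{comporththm} asserts that $\{f_{\beta}:\beta\in\Q/\Z\}$ is a complete orthonormal basis for $L^2(\R/\Z)$. Applying \cite[Proposition 1.1]{gundy1966} to our normalized, complete sequence of martingale differences then shows that (\ref{marteq10}) is an $H$-system. I would also note that the increasing $\sigma$-algebras $\B_1\subseteq\B_2\subseteq\cdots$ generate the full Borel $\sigma$-algebra on $\R/\Z$: since $n\mapsto\beta_n$ is a bijection onto $\Q/\Z$, every rational point eventually occurs among $\beta_1, \dots, \beta_n$, so the associated partitions separate the (dense) rationals arbitrarily finely and $\bigvee_n\B_n$ is the Borel $\sigma$-algebra. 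This ensures that the representation produced by Gundy's theorem converges to $F$ itself rather than to a conditional expectation of $F$ onto a proper subalgebra.

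With these hypotheses in hand, I would apply \cite[Theorem 2.1(a)]{gundy1966} to the given function $F$, which is Borel measurable and finite almost everywhere, to obtain real coefficients $\{c(\beta_n)\}$ for which the partial sums $\sum_{n=1}^N c(\beta_n)f_{\beta_n}(x)$ converge to $F(x)$ for almost all $x$. The point requiring the most care is the translation of Gundy's abstract hypotheses into the concrete framework of the functions $f_{\beta_n}$ --- in particular confirming, via Theorem \ref{comporththm} and \cite[Proposition 1.1]{gundy1966}, that our martingale difference sequence genuinely qualifies as an $H$-system. It is exactly this verification that makes available Gundy's conclusion, which assumes no integrability of $F$, without imposing any further hypotheses on $F$.
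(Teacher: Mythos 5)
Your proposal matches the paper's treatment exactly: the paper likewise observes that Theorem \ref{martdif1} and Theorem \ref{comporththm} combined with \cite[Proposition 1.1]{gundy1966} show that (\ref{marteq10}) is an $H$-system, and then states the result as \cite[Theorem 2.1(a)]{gundy1966} applied to that sequence. Your additional remark that $\bigvee_n \B_n$ is the full Borel $\sigma$-algebra is a reasonable elaboration of why completeness suffices, but otherwise the argument is the same.
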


In order to express results of this sort in terms of the continued
fraction expansion of $\alpha$, it is convenient
to select the admissible ordering of $\Q/\Z$ so that it has some
arithmetical significance.  Here we use the admissible
ordering (\ref{farey}).

\begin{proof}[Proof of Theorem \ref{cfserrep2}]  Let $\beta_1, \beta_2,
\dots $ be the admissible enumeration
(\ref{farey}).  By Theorem \ref{cfserrep0} there exist real numbers
$\{c(\beta):\beta\in\Q/\Z\}$ such that
\begin{equation}
\lim_{Q\rar\infty} \sum_{h(\beta) \le Q} c(\beta)\fb(x) =
F(x)\label{gundyeq3}
\end{equation}
for almost all $x$ in $\R/\Z$.  Assume that $\alpha$ is an irrational
point in $\R/\Z$ such that (\ref{gundyeq3}) holds.
For each positive integer $Q$ let $M = M(Q,\alpha)$ and $N = N(Q,\alpha)$
be the unique positive integers defined
by (\ref{cfraceq15}).  Using Theorem \ref{fbetacontfr} we can write
\begin{align}\label{sum4}
\begin{split}
\sum_{h(\beta) \le Q} c(\beta)\fb(\alpha)
	&= \sum_{n=1}^{N-1} (-1)^{n-1} q_{n-1}(\alpha) \sum_{\beta\in
E_n(\alpha)} c(\beta)\\
	&+ (-1)^{N-1} q_{N-1}(\alpha) \sum_{m=1}^M
		c\Big(\frac{mp_{N-1}(\alpha)+p_{N-2}(\alpha)}{mq_{N-1}(\alpha)+q_{N-2}(\alpha)}\Big).
\end{split}
\end{align}
We restrict the parameter $Q$ in (\ref{sum4}) to the subsequence of
denominators of the convergents to $\alpha$.  Along
this subsequence the positive integers $M$ and $N$ are related by the
identity $M = a_N$.  Thus (\ref{sum4}) reduces
to the simpler assertion (\ref{sum3}).
\end{proof}

Next we construct an example to show that the numbers
$\{c(\beta):\beta\in\Q/\Z\}$ that occur in the statement
of Theorem \ref{cfserrep2} are not uniquely determined by $F$.  We require
the following
lemmas, the first of which identifies the inverse of the maps
$\beta\mapsto\beta'$ and $\beta\mapsto\beta''$.

\begin{lemma}\label{primes}  Let $r'/s' < r/s < r''/s''$ be three
consecutive points in the set $\ff_s$ of Farey fractions
of order $s$.  Then we have
\begin{equation}
\{\beta\in\Q/\Z: \beta' = r/s\} = \Big\{\frac{mr+r''}{ms+s''}: m = 1, 2,
\dots \Big\}\label{prime1}
\end{equation}
and
\begin{equation}
\{\gamma\in\Q/\Z: \gamma'' = r/s\} = \Big\{\frac{nr+r'}{ns+s'}: n = 1, 2,
\dots \Big\}.\label{prime2}
\end{equation}
\end{lemma}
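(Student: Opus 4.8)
The plan is to prove each of the two set identities by a standard Farey-neighbour determinant computation, treating (\ref{prime1}) in detail and obtaining (\ref{prime2}) by the mirror-image argument. Throughout I would use two elementary facts about consecutive Farey fractions: first, that if $a/b < c/d$ are consecutive in some $\ff_Q$ then $bc-ad=1$; and second, the converse-type statement that two reduced fractions $a/b<c/d$ with $bc-ad=1$ are consecutive in $\ff_Q$ for exactly those $Q$ with $\max\{b,d\}\le Q < b+d$. The second fact is essentially the remark preceding Lemma \ref{farintlem}, that $I(\gamma',\gamma'')$ is a component interval of $\R/\Z\setminus\F_q$ for all $q$ with $\max\{h(\gamma'),h(\gamma'')\}\le q < h(\gamma)$, combined with the height relation in (\ref{hp1}).

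For the inclusion $\supseteq$ in (\ref{prime1}), I would fix $m\ge 1$ and set $\beta_m=(mr+r'')/(ms+s'')$. Since $r/s<r''/s''$ are consecutive we have $sr''-rs''=1$, and a direct computation gives $s(mr+r'')-r(ms+s'')=sr''-rs''=1$. This shows simultaneously that $\beta_m$ is reduced with $h(\beta_m)=ms+s''$, that $r/s<\beta_m$, and that $r/s$ and $\beta_m$ form a unimodular pair, hence are consecutive in $\ff_Q$ for $\max\{s,ms+s''\}\le Q<s+(ms+s'')$. Taking $Q=h(\beta_m)=ms+s''$ lands in this range, so $r/s$ is the left neighbour of $\beta_m$ at its own level, that is $\beta_m'=r/s$.

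For the reverse inclusion I would take any $\beta=p/q$ with $\beta'=r/s$, so that $r/s$ and $\beta$ are consecutive in $\ff_q$ and $sp-rq=1$. Subtracting the two unimodular relations gives $s(p-r'')=r(q-s'')$, and $\gcd(r,s)=1$ forces $s\mid q-s''$; writing $q=ms+s''$ then yields $p=mr+r''$, so $\beta=\beta_m$. The only remaining point is the positivity $m\ge 1$, which I would extract from (\ref{hp1}): since $h(\beta)=h(\beta')+h(\beta'')=s+h(\beta'')>s$, the value $m=0$ (which would force $q=s''<s$) and all negative $m$ are excluded.

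Finally, (\ref{prime2}) follows by the same argument with the roles of left and right neighbours interchanged, using the consecutive pair $r'/s'<r/s$ (with unimodular relation $s'r-r's=1$) in place of $r/s<r''/s''$. The step I expect to require the most care is pinning down that $r/s$ is the neighbour of $\beta_m$ \emph{precisely} at level $h(\beta_m)$ and not merely at some larger level, that is, correctly invoking the range $\max\{b,d\}\le Q<b+d$ and checking that $m=0$ reproduces the already-present fraction $r''/s''$ rather than a genuinely new point having $r/s$ as its predecessor. The $\gcd$ and divisibility bookkeeping in the reverse inclusion is then routine once the two determinant identities are in place.
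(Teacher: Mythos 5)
Your proposal is correct and follows essentially the same route as the paper, which simply displays the descending chain of mediants $r/s < \dots < \frac{mr+r''}{ms+s''} < \frac{(m-1)r+r''}{(m-1)s+s''} < \dots < \frac{r''}{s''}$ and appeals to ``basic properties of Farey fractions.'' You have merely made those properties explicit --- the unimodular determinant identity giving the forward inclusion, the linear Diophantine equation $sp-rq=1$ giving the reverse inclusion, and the height relation (\ref{hp1}) excluding $m\le 0$ --- all of which is sound.
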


\begin{proof}  The identity (\ref{prime1}) follows immediately from the
inequalities
\begin{equation*}
\frac{r}{s} < \dots < \frac{mr+r''}{ms+s''} <
\frac{(m-1)r+r''}{(m-1)s+s''} < \dots < \frac{r+r''}{s+s''} <
\frac{r''}{s''},
\end{equation*}
and basic properties of Farey fractions.  Then (\ref{prime2}) is proved in
the same manner.
\end{proof}

\begin{lemma}\label{example}  Let $r'/s' < r/s < r''/s''$ be three
consecutive points in the set $\ff_s$
and write $\delta = r/s$ for the image of $r/s$ in $\Q/\Z$.  Define
\begin{equation*}
\beta_m = \frac{mr+r''}{ms+s''}\quad\text{for}\quad m = 0, 1, 2, \dots ,
\end{equation*}
and
\begin{equation*}
\gamma_n = \frac{nr+r'}{ns+s'}\quad\text{for}\quad n = 0, 1, 2, \dots .
\end{equation*}
Then $\delta' = r'/s'$ and $\delta'' = r''/s''$ in $\Q/\Z$, and for
positive integers $M$ and $N$ we have
\begin{equation}
\sum_{m=1}^M f_{\beta_m}(x) =  \begin{cases}M h(\delta) &\text{ if $x\in
I(\delta,\beta_M)$},\\
	- h(\delta'') &\text{ if $x\in I(\beta_M,\delta'')$},\\
	0 &\text{ if $x\notin\oI(\delta,\delta'')$}.\end{cases}\label{example2}
\end{equation}
and
\begin{equation}
\sum_{n=1}^N f_{\gamma_n}(x) =  \begin{cases}-N h(\delta) &\text{ if $x\in
I(\gamma_N,\delta)$},\\
	h(\delta') &\text{ if $x\in I(\delta',\gamma_N)$},\\
	0 &\text{ if $x\notin\oI(\delta',\delta)$}.\end{cases}\label{example3}
\end{equation}
\end{lemma}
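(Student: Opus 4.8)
The plan is to reduce everything to the defining formula (\ref{fbdef2}) once the neighbour structure of each $\beta_m$ and $\gamma_n$ has been identified. First I would record that, since $r'/s' < r/s < r''/s''$ are consecutive in $\ff_s$, the point $\delta = r/s$ has Farey neighbours $r'/s'$ and $r''/s''$, so $\delta' = r'/s'$ and $\delta'' = r''/s''$ in $\Q/\Z$ directly from the definition of the maps $\beta\mapsto\beta'$ and $\beta\mapsto\beta''$ (this is also recorded in Lemma \ref{primes}). The key observation is that $\beta_m$ is the mediant of $\delta$ and $\beta_{m-1}$, since
\[
\frac{r + \bigl((m-1)r + r''\bigr)}{s + \bigl((m-1)s + s''\bigr)} = \frac{mr + r''}{ms + s''} = \beta_m ,
\]
with $\beta_0 = r''/s'' = \delta''$. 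Hence $\{\delta, \beta_m, \beta_{m-1}\}$ are consecutive in $\ff_{h(\beta_m)}$, which gives $\beta_m' = \delta$ and $\beta_m'' = \beta_{m-1}$ for every $m \ge 1$ (this is (\ref{prime1})). Reading off (\ref{fbdef2}), the function $f_{\beta_m}$ is then supported on $\oI(\delta, \beta_{m-1})$, equal to $h(\delta)$ on $I(\delta, \beta_m)$ and to $-h(\beta_{m-1})$ on $I(\beta_m, \beta_{m-1})$.

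With these values in hand I would verify (\ref{example2}) by splitting on the location of $x$. If $x \notin \oI(\delta, \delta'')$ then $x$ lies outside the support of every $f_{\beta_m}$ with $1 \le m \le M$, so the sum is $0$. If $x \in I(\delta, \beta_M)$ then, since $\delta < \beta_M \le \beta_m$ for all $m \le M$, the point $x$ lies in $I(\delta, \beta_m)$ for each such $m$, contributing $h(\delta)$ apiece and giving $M h(\delta)$. The only delicate case is $x \in I(\beta_M, \delta'')$, where (for $x$ not a node) $x$ falls in a unique subinterval $I(\beta_k, \beta_{k-1})$ with $1 \le k \le M$. Then $f_{\beta_m}(x) = h(\delta)$ for $m < k$, $f_{\beta_k}(x) = -h(\beta_{k-1})$, and $f_{\beta_m}(x) = 0$ for $m > k$. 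Summing and using $h(\beta_{k-1}) = (k-1)h(\delta) + h(\delta'')$ — which follows by iterating the additivity $h(\beta_m) = h(\beta_m') + h(\beta_m'') = h(\delta) + h(\beta_{m-1})$ from (\ref{hp1}) — the total telescopes to $(k-1)h(\delta) - h(\beta_{k-1}) = -h(\delta'')$, independent of $k$. This cancellation is the crux of the argument. Since each partial sum $\sum_{m=1}^M f_{\beta_m}$ is itself a normalized function of bounded variation, its value at the finitely many interior nodes $\beta_1, \dots, \beta_{M-1}$ of $I(\beta_M, \delta'')$ equals the common one-sided limit $-h(\delta'')$, so the formula holds throughout the open interval.

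Finally, (\ref{example3}) follows by the mirror-image argument: now $\gamma_n$ is the mediant of $\gamma_{n-1}$ and $\delta$, so $\gamma_n' = \gamma_{n-1}$ and $\gamma_n'' = \delta$ (with $\gamma_0 = r'/s' = \delta'$), which is (\ref{prime2}). Thus $f_{\gamma_n}$ is supported on $\oI(\gamma_{n-1}, \delta)$, equal to $h(\gamma_{n-1})$ on $I(\gamma_{n-1}, \gamma_n)$ and to $-h(\delta)$ on $I(\gamma_n, \delta)$, and $h(\gamma_{n-1}) = (n-1)h(\delta) + h(\delta')$ again by (\ref{hp1}). Running the identical three-case analysis with left and right interchanged yields $-N h(\delta)$ on $I(\gamma_N, \delta)$, the telescoped value $-(k-1)h(\delta) + h(\gamma_{k-1}) = h(\delta')$ on $I(\delta', \gamma_N)$, and $0$ outside $\oI(\delta', \delta)$, which is exactly (\ref{example3}).
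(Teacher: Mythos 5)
Your proof is correct, but it runs along a different track from the paper's. Both arguments hinge on the same key input — the identification $\beta_m' = \delta$, $\beta_m'' = \beta_{m-1}$, $\beta_0 = \delta''$ coming from Lemma \ref{primes} — but the paper then works entirely in the sawtooth representation (\ref{fbdef1}): it writes $\sum_{m=1}^M f_{\beta_m}(x)$ as three sums of terms $h(\cdot)\psi(x - \cdot)$, observes that the $h(\beta_m)\psi(x-\beta_m)$ and $-h(\beta_m'')\psi(x-\beta_{m-1})$ terms telescope, and lands on the closed form $h(\beta_M)\psi(x-\beta_M) - Mh(\delta)\psi(x-\delta) - h(\delta'')\psi(x-\delta'')$, from which (\ref{example2}) is read off directly. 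You instead use the step-function form (\ref{fbdef2}) and evaluate pointwise, with the telescoping pushed into the height identity $h(\beta_{k-1}) = (k-1)h(\delta) + h(\delta'')$ from (\ref{hp1}). The paper's route is slightly slicker in that the $\psi$-representation is automatically normalized, so the values at the interior nodes $\beta_1,\dots,\beta_{M-1}$ come for free, whereas you need (and correctly supply) the separate observation that a finite sum of normalized functions is normalized, so the node values agree with the common one-sided limits. Your route has the compensating virtue of making visible exactly where the constancy of the value $-h(\delta'')$ across the subintervals $I(\beta_k,\beta_{k-1})$ comes from. Both are complete proofs.
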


\begin{proof}  That $\delta' = r'/s'$ and $\delta'' = r''/s''$ in $\Q/\Z$
follows the definition of
the maps $\beta\mapsto\beta'$ and $\beta\mapsto\beta''$, and the
hypothesis that $r'/s' < r/s < r''/s''$
are consecutive points in $\ff_s$.  From the definition (\ref{fbdef1}) we get
\begin{align}
\sum_{m=1}^M f_{\beta_m}(x) &= \sum_{m=1}^M h(\beta_m)\psi(x -
\beta_m)\label{example4}\\
	&\qquad\qquad -\sum_{m=1}^M h(\beta_m')\psi(x - \beta_m') - \sum_{m=1}^M
h(\beta_m'')\psi(x - \beta_m'').\nonumber
\end{align}
It follows using Lemma \ref{primes} that $\beta_m' = \delta$, $\beta_m'' =
\beta_{m-1}$ for each $m = 1, 2, \dots , M$,
and $\beta_0 = \delta''$.  These observations allow us to simplify
(\ref{example4}).  We find that
\begin{equation*}
\sum_{m=1}^M f_{\beta_m}(x) = h(\beta_M)\psi(x - \beta_M) - M
h(\delta)\psi(x - \delta) - h(\delta'')\psi(x - \delta''),
\end{equation*}
and this easily implies (\ref{example2}).  The identity (\ref{example3})
is established in a similar manner.
\end{proof}

Assume now that $\delta$ is a nonzero point in $\Q/\Z$.  Lemma
\ref{primes} and Lemma \ref{example} imply that
\begin{equation}
\lim_{Q\rar\infty}\Big\{f_{\delta}(x) - \sum_{\substack{h(\beta) \le
Q\\\delta = \beta'}} f_{\beta}(x)
	- \sum_{\substack{h(\gamma) \le Q\\\delta = \gamma''}}
f_{\gamma}(x)\Big\} = 0\label{example5}
\end{equation}
at almost all points $x$ in $\R/\Z$.  This shows that the numbers
$\{c(\beta):\beta\in\Q/\Z\}$ in the
statement of Theorem \ref{cfserrep2} are not uniquely determined by the
function $F$.


\section{Further Applications of the Martingale Property}

In this section we formalize the argument used to prove Theorem
\ref{cfserrep2} and derive further results about the
continued fraction expansion of almost all irrational points $\alpha$ in
$\R/\Z$.

Let $\beta_1, \beta_2, \dots $ be an admissible enumeration of $\Q/\Z$ and
let $\{c(\beta_n): n = 1, 2, \dots \}$ be a
collection of real numbers.  For $N = 1, 2, \dots $ we define
$S_N:\R/\Z\rar\R$ by
\begin{equation}
S_N(x) = \sum_{n=1}^N c(\beta_n) f_{\beta_n}(x).\label{sum-1}
\end{equation}
Theorem \ref{admordthm1} implies that each function $S_N$ is
$\B_N$-measurable, and
from Theorem \ref{martdif1} we conclude that the sequence of functions and
$\sigma$-algebras
\begin{equation}
\big\{(S_N,\B_N): N = 1, 2, \dots \big\}\label{martingale1}
\end{equation}
forms a martingale.  Of course this fact allows us to draw conclusions
about the behavior of the partial sums $S_N(x)$
for almost all $x$ as $N\rar\infty$.  As before we wish to express our
results in terms of the continued fraction
expansion of $\alpha$.  Therefore we use the admissible ordering
(\ref{farey}) and consider the subsequence of partial sums
\begin{equation}
T_Q(x) = \sum_{h(\beta) \le Q} c(\beta)\fb(x).\label{sum2}
\end{equation}
Clearly we have
\begin{equation*}
T_Q(x) = S_N(x), \quad\text{where}\quad N = N_Q = \sum_{q \le Q} \varphi(q).
\end{equation*}
It will be convenient to write $\M_Q = \B_{N_Q}$ for the corresponding
$\sigma$-algebra and $\M_0 = \{\emptyset, \R/\Z\}$ for
the trivial $\sigma$-algebra.  Thus a function
$g:\R/\Z\rar \C$ is $\M_Q$-measurable if it is measurable with respect to the
$\sigma$-algebra generated by the component intervals of
$\R/\Z\setminus\F_Q$ together with the singleton sets $\{\beta\}$
for $\beta$ in $\F_Q$.  Alternatively, $g$ is $\M_Q$-measurable if it is
constant on each component interval of
$\R/\Z\setminus\F_Q$.  It follows that the sequence of functions and
$\sigma$-algebras
\begin{equation}
\big\{(T_Q,\M_Q): Q = 1, 2, \dots \big\}\label{martingale2}
\end{equation}
forms a martingale.  It is instructive to 
note that if $g:\R/\Z\rar \R$ is an integrable function,
then the function (using a standard notation)
\begin{equation*}
g(x|\M_Q) = \int_{\R/\Z} K_Q(x,y) g(y)\ \dy
\end{equation*}
is the conditional expectation of $g$ given the $\sigma$-algebra $\M_Q$.
This is easily verified using Lemma \ref{KQeval}.

The following result is an application of Theorem \ref{fbetacontfr} and
the martingale convergence theorem.

\begin{theorem}\label{martthm1}  Let $\{c(\beta):\beta\in\Q/\Z\}$ be a
collection of real numbers and for each
positive integer $Q$ let $T_Q(x)$ be defined by {\rm (\ref{sum2})}.  If the
sequence of $L^1$-norms
\begin{equation}
\int_{\R/\Z} \bigl|T_Q(x)\bigr|\ \dx, \quad Q = 1, 2, \dots , \label{sum6}
\end{equation}
is bounded, then
\begin{equation}
\lim_{N\rar\infty} \sum_{n=1}^N (-1)^{n-1} q_{n-1}(\alpha) \sum_{\beta\in
E_n(\alpha)} c(\beta) = F(\alpha)\label{sum7}
\end{equation}
exists for almost all irrational points $\alpha$ in $\R/\Z$ and $\|F\|_1 <
\infty$.
\end{theorem}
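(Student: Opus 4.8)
The plan is to apply Doob's martingale convergence theorem to the martingale $\{(T_Q,\M_Q): Q = 1, 2, \dots\}$ of (\ref{martingale2}), and then to translate the resulting almost everywhere convergence into a statement about continued fractions using Theorem \ref{fbetacontfr}, exactly as in the proof of Theorem \ref{cfserrep2}.

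First I would observe that everything needed about the martingale structure is already in place: by Theorem \ref{admordthm1} and Theorem \ref{martdif1} the sequence (\ref{martingale2}) is a martingale, since $T_Q = S_{N_Q}$ together with the $\sigma$-algebras $\M_Q = \B_{N_Q}$ is a subsequence of the martingale $\{(S_N,\B_N)\}$. The hypothesis (\ref{sum6}) says precisely that this martingale is bounded in $L^1$-norm, that is $\sup_Q \int_{\R/\Z}|T_Q(x)|\ \dx < \infty$. The martingale convergence theorem then guarantees the existence of a function $F$ with $T_Q(x)\rar F(x)$ for almost all $x$ in $\R/\Z$. Applying Fatou's lemma to $|T_Q|\rar|F|$ gives $\|F\|_1\le\liminf_Q\|T_Q\|_1<\infty$, which is the asserted integrability of $F$.

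It remains to rewrite the convergence of $T_Q(\alpha)$ in the form (\ref{sum7}). For an irrational $\alpha$, with $M = M(Q,\alpha)$ and $N = N(Q,\alpha)$ determined by (\ref{cfraceq15}), Theorem \ref{fbetacontfr} yields
\begin{equation*}
T_Q(\alpha) = \sum_{n=1}^{N-1} (-1)^{n-1} q_{n-1}(\alpha) \sum_{\beta\in E_n(\alpha)} c(\beta) + (-1)^{N-1} q_{N-1}(\alpha) \sum_{m=1}^{M} c\Big(\frac{mp_{N-1}+p_{N-2}}{mq_{N-1}+q_{N-2}}\Big).
\end{equation*}
Restricting $Q$ to the sequence of denominators $q_N$ of the principal convergents to $\alpha$ forces $M = a_N$, so the last term fills out the sum over $E_N(\alpha)$ and one obtains $T_{q_N}(\alpha) = \sum_{n=1}^{N}(-1)^{n-1}q_{n-1}(\alpha)\sum_{\beta\in E_n(\alpha)}c(\beta)$. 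Since the full sequence $T_Q(\alpha)$ converges to $F(\alpha)$ for almost all $\alpha$, so does every subsequence; in particular the partial sums appearing on the left of (\ref{sum7}) converge to $F(\alpha)$, as claimed.

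The argument is genuinely short, because the two substantive ingredients are available beforehand: the martingale property of (\ref{martingale2}) and the continued fraction evaluation of $\fb(\alpha)$ from Theorem \ref{fbetacontfr}. I do not expect a real obstacle; the only point needing attention is the passage from convergence of the full sequence $T_Q$ to convergence of the convergent-denominator subsequence $T_{q_N}$, and this is immediate, since almost everywhere convergence of a sequence is inherited, with the same limit, by every subsequence.
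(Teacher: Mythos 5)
Your proposal is correct and follows essentially the same route as the paper: apply the martingale convergence theorem to the $L^1$-bounded martingale $(T_Q,\M_Q)$, then use Theorem \ref{fbetacontfr} and the specialization $Q = q_N(\alpha)$ to identify the partial sums in (\ref{sum7}) as the subsequence $T_{q_N(\alpha)}(\alpha)$. The only cosmetic difference is that you spell out the Fatou's lemma step for $\|F\|_1 < \infty$, which the paper folds into its citation of the martingale convergence theorem.
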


\begin{proof}  By the martingale convergence theorem (see \cite[Section
4.2]{durrett2005} or \cite[Section 3.2]{garsia1970}),
the hypothesis (\ref{sum6}) implies that the limit
\begin{equation}
\lim_{Q\rar\infty} T_Q(x) = F(x)\label{limit1}
\end{equation}
exists for almost all $x$ in $\R/\Z$ and satisfies $\|F\|_1 < \infty$.
Suppose that $\alpha$ is an irrational point in
$\R/\Z$ for which (\ref{limit1}) holds.  For each positive integer $Q$ let
$M = M(Q,\alpha)$ and $N = N(Q,\alpha)$
be the unique positive integers defined by (\ref{cfraceq15}).  Using
Theorem \ref{fbetacontfr} we can write
\begin{align}
T_Q(\alpha) &= \sum_{n=1}^{N-1} (-1)^{n-1} q_{n-1}(\alpha) \sum_{\beta\in
E_n(\alpha)} c(\beta)\nonumber\\
	&\qquad\qquad	+ (-1)^{N-1} q_{N-1}(\alpha) \sum_{m=1}^M
		c\Big(\frac{mp_{N-1}(\alpha)+p_{N-2}(\alpha)}{mq_{N-1}(\alpha)+q_{N-2}(\alpha)}\Big).\label{sum8}
\end{align}
If $Q = q_N(\alpha)$ is a denominator of a convergent to $\alpha$ then
(\ref{sum8}) simplifies to
\begin{equation}
T_{q_N(\alpha)}(\alpha) = \sum_{n=1}^N (-1)^{n-1} q_{n-1}(\alpha)
\sum_{\beta\in E_n(\alpha)} c(\beta).\label{sum9}
\end{equation}
Now (\ref{sum7}) follows from (\ref{limit1}) and (\ref{sum9}).
\end{proof}

If $F:\R/\Z\rar\R$ is an integrable function and $T_Q$ is given by
\begin{equation}
T_Q(x) = \int_{\R/\Z} K_Q(x,y) F(y)\ \dy = \sum_{h(\beta)\le Q}
c(\beta)\fb(x),\label{ker1}
\end{equation}
where
\begin{equation*}
c(\beta) = \int_{\R/\Z} \fb(y) F(y)\ \dy,
\end{equation*}
then the conclusion (\ref{sum7}) follows from the much simpler Lemma
\ref{KQleb1}.  It is well known
(see \cite[Theorem 5.6, Section 4.5]{durrett2005}) that $T_Q$ has the form (\ref{ker1})
for some integrable function $F$
if and only if $T_Q$ converges to $F$ in $L^1$-norm as $Q\rar\infty$.  By
appealing to the martingale convergence
theorem we are able to establish (\ref{sum7}) under the weaker hypothesis
(\ref{sum6}).

Let $\delta$ be a nonzero point in $\Q/\Z$.  Using (\ref{example2}) and
(\ref{example3}) we find that
\begin{equation*}
\int_{\R/\Z} \Bigl|f_{\delta}(x) - \sum_{\substack{h(\beta) \le Q\\\delta
= \beta'}} f_{\beta}(x)
	- \sum_{\substack{h(\gamma) \le Q\\\delta = \gamma''}}
f_{\gamma}(x)\Bigr|\ \dx = 2h(\delta)^{-1},
\end{equation*}
and
\begin{equation*}
\int_{\R/\Z} \Bigl|\sum_{\substack{h(\beta) \le Q\\\delta = \beta'}}
f_{\beta}(x)
	+ \sum_{\substack{h(\gamma) \le Q\\\delta = \gamma''}}
f_{\gamma}(x)\Bigr|\ \dx \le 4h(\delta)^{-1},
\end{equation*}
for all positive integers $Q$.  Thus
\begin{equation*}
T_Q(x) = \sum_{\substack{h(\beta) \le Q\\\delta = \beta'}} f_{\beta}(x)
	+ \sum_{\substack{h(\gamma) \le Q\\\delta = \gamma''}} f_{\gamma}(x)
\end{equation*}
is an example that does not have the form (\ref{ker1}), but for which the
sequence (\ref{sum6}) of $L^1$-norms is  bounded.
Of course in this example we can establish the almost everywhere
convergence (\ref{example5}) directly without
appealing to Theorem \ref{martthm1}.

For the remainder of this section we consider the behavior of the partial
sums $T_Q(x)$ under the hypothesis that
$\beta\mapsto c(\beta)h(\beta)$ is bounded on $\Q/\Z$.  It will be
convenient to write
\begin{equation*}
T_Q(x) = \sum_{q=1}^Q U_q(x),\quad\text{where}\quad U_q(x) =
\sum_{h(\beta) = q} c(\beta)\fb(x).
\end{equation*}
If $\beta_1$ and $\beta_2$ are distinct points in $\Q/\Z$ with $h(\beta_1)
= h(\beta_2)$ then $\oI(\beta_1',\beta_1'')$
and $\oI(\beta_2',\beta_2'')$ intersect in a finite set, and therefore in
a set of measure zero.  From this
observation and the definition (\ref{fbdef2}) we find that
\begin{align}\label{ineq1}
\begin{split}
\tfrac12 q \max\{|c(\beta)|: h(\beta) = q\} &\le
\sup_{x\in\R/\Z}\bigl|U_q(x)\bigr| \\
	&= \|U_q\|_{\infty} \le q \max\{|c(\beta)|: h(\beta) = q\}.
\end{split}
\end{align}
This shows that $\beta\mapsto c(\beta)h(\beta)$ is bounded on $\Q/\Z$ if
and only if the sequence $\|U_q\|_{\infty}$
is bounded for $q = 1, 2, \dots $.

\begin{lemma}\label{stopping0}  For $q = 1, 2, \dots , Q$, let
$A_q\subseteq\R/\Z$ be an $\M_{q-1}$-measurable subset,
where $\M_0 = \{\emptyset, \R/\Z\}$ is the trivial $\sigma$-algebra.
Write $\chi_{A_q}$ for the characteristic function
of $A_q$.  Then we have
\begin{equation}
\int_{\R/\Z} \Big\{\sum_{q=1}^Q \chi_{A_q}(x) U_q(x)\Big\}^2\ \dx =
	\int_{\R/\Z} \Big\{\sum_{q=1}^Q \chi_{A_q}(x)U_q(x)^2\Big\}\
\dx.\label{intid1}
\end{equation}
\end{lemma}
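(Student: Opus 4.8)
The plan is to expand the square on the left of (\ref{intid1}) and to show that every off-diagonal cross term integrates to zero, so that only the diagonal survives and reproduces the right-hand side. Writing $V_q(x) = \chi_{A_q}(x) U_q(x)$ for brevity, we have
$$\Big(\sum_{q=1}^Q V_q(x)\Big)^2 = \sum_{q=1}^Q V_q(x)^2 + 2\sum_{1 \le p < q \le Q} V_p(x) V_q(x).$$
Since $\chi_{A_q}^2 = \chi_{A_q}$, the diagonal term equals $\sum_{q=1}^Q \chi_{A_q}(x) U_q(x)^2$, whose integral is exactly the right-hand side of (\ref{intid1}). Each $V_q$ is bounded, because $|\chi_{A_q}| \le 1$ and $U_q$ is bounded by (\ref{ineq1}), so every term is integrable and termwise integration is harmless. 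Thus it remains only to prove that
$$\int_{\R/\Z} \chi_{A_p}(x) U_p(x) \chi_{A_q}(x) U_q(x)\ \dx = 0 \qquad\text{whenever } p < q.$$

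To establish this I would fix $p < q$ and treat the product $g = \chi_{A_p} U_p \chi_{A_q}$ as a single factor multiplying $U_q$. The key observation is that $g$ is $\M_{q-1}$-measurable. Indeed, $U_p$ is $\M_p$-measurable, since it is a finite combination of the functions $f_{\beta}$ with $h(\beta) = p$, each of which is constant on the components of $\R/\Z\setminus\F_p$ by (\ref{fbdef2}); meanwhile $\chi_{A_p}$ is $\M_{p-1}$-measurable and $\chi_{A_q}$ is $\M_{q-1}$-measurable by hypothesis. Because $p \le q-1$ we have $\M_{p-1} \subseteq \M_p \subseteq \M_{q-1}$, and hence all three factors, and therefore $g$, are constant on each component interval $J$ of $\R/\Z\setminus\F_{q-1}$.

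Next I would invoke the martingale property. Since the enumeration (\ref{farey}) is admissible, Theorem \ref{martdif1} together with the discussion following (\ref{martingale2}) shows that $(T_Q, \M_Q)$ is a martingale, so $U_q = T_q - T_{q-1}$ satisfies $\int_J U_q(x)\ \dx = 0$ for every component $J$ of $\R/\Z\setminus\F_{q-1}$. (This can also be seen directly: each $f_{\beta}$ with $h(\beta)=q$ is supported on $\oI(\beta',\beta'')$, which is the closure of a component of $\R/\Z\setminus\F_{q-1}$, and $\int_{I(\beta',\beta'')} f_{\beta}\ \dx = 0$.) Writing $g_J$ for the constant value of $g$ on $J$, and summing over the finitely many components $J$ of $\R/\Z\setminus\F_{q-1}$, I then obtain
$$\int_{\R/\Z} g(x) U_q(x)\ \dx = \sum_J g_J \int_J U_q(x)\ \dx = 0,$$
which is precisely the vanishing of the cross term, and the lemma follows.

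The only genuinely delicate point is the measurability step above: one must confirm that the previsible factor $g$ really is constant on the components of $\R/\Z\setminus\F_{q-1}$, so that it can be pulled outside the integral of $U_q$ component by component. Everything else is the standard orthogonality of martingale differences against previsible multipliers, and no integrability hypotheses beyond the boundedness recorded in (\ref{ineq1}) are required.
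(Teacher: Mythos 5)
Your proposal is correct and follows essentially the same route as the paper: expand the square, and kill each cross term $\int \chi_{A_p}U_p\chi_{A_q}U_q$ by observing that the previsible factor is constant on each component interval of $\R/\Z\setminus\F_{q-1}$ while $U_q$ integrates to zero over every such component. The paper phrases this by decomposing the $\M_{q-1}$-measurable set $A_p\cap A_q$ into component intervals rather than bundling the factors into a single function $g$, but the argument is the same.
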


\begin{proof}  We square out the integrand on the left of (\ref{intid1})
and integrate term by term.  Then it is clear that
the lemma will follow if we can verify that
\begin{equation}
\int_{A_q\cap A_r} U_q(x)U_r(x)\ \dx = 0 \label{intid2}
\end{equation}
whenever $1 \le q < r \le Q$.  Let $J$ be a component of
$\R/\Z\setminus\F_{r-1}$.  Then we have
\begin{equation*}
\int_J U_r(x)\ \dx = 0.
\end{equation*}
The function $U_q$ is
$\M_{r-1}$-measurable and therefore constant on $J$.
This implies that
\begin{equation}
\int_J U_q(x)U_r(x)\ \dx = 0. \label{intid3}
\end{equation}
By hypothesis the set $A_q\cap A_r$ is $\M_{r-1}$-measurable.  Therefore
it can be written as a finite disjoint union
of component intervals of $\R/\Z\setminus\F_{r-1}$ together with a set of
measure zero.  Thus (\ref{intid2}) follows
immediately from (\ref{intid3}), and the lemma is proved.
\end{proof}

\begin{theorem}\label{martthm2}  Let $\{c(\beta):\beta\in\Q/\Z\}$ be a
collection of real numbers such that
$\beta\mapsto c(\beta)h(\beta)$ is bounded on $\Q/\Z$.  Write $\cc$ for
the subset of points $\alpha$ in $\R/\Z$
such that
\begin{equation}
\lim_{Q\rar\infty} \sum_{q=1}^Q U_q(\alpha)\label{sum10}
\end{equation}
exists and is finite.  Write $\D$ for the subset of points $\alpha$ in
$\R/\Z$ such that both
\begin{equation}
\liminf_{Q\rar\infty} \sum_{q=1}^Q U_q(\alpha) = -\infty\quad\text{and}\quad
		\limsup_{Q\rar\infty} \sum_{q=1}^Q U_q(\alpha) = +\infty.\label{sum11}
\end{equation}
Write $\E$ for the subset of points $\alpha$ in $\R/\Z$ such that
\begin{equation}
\sum_{q=1}^{\infty} U_q(\alpha)^2 < \infty.\label{sum12}
\end{equation}
Then we have
\begin{equation}
{\rm (i)}\ |\cc\cup\D| = 1,\quad {\rm (ii)}\ |\cc\setminus\E| = 0,
	\quad\text{and}\quad {\rm (iii)}\ |\E\setminus\cc| = 0.\label{sets1}
\end{equation}
\end{theorem}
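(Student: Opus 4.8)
The plan is to read (\ref{sum10})--(\ref{sum12}) purely as statements about the martingale $\{(T_Q,\M_Q)\}$ whose martingale differences are the functions $U_q$, using \emph{only} that these differences are uniformly bounded. Indeed, the hypothesis that $\beta\mapsto c(\beta)h(\beta)$ is bounded together with (\ref{ineq1}) furnishes a constant $B$ with $\|U_q\|_\infty\le B$ for every $q$. Since we are given no control on the global $L^1$- or $L^2$-norms of $T_Q$, the whole argument proceeds by localizing with stopping times adapted to the filtration $\{\M_Q\}$ and then invoking Lemma \ref{stopping0}, which supplies exactly the orthogonality needed to identify the $L^2$-norm of a stopped partial sum with the integral of a truncated square function.

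To prove (i), I would introduce for each positive integer $\lambda$ the stopping time $\sigma_\lambda(x)=\inf\{Q:T_Q(x)>\lambda\}$ and observe that the stopped martingale $T_{Q\wedge\sigma_\lambda}$ is bounded above by $\lambda+B$, because $T_Q$ jumps by at most $B$. Hence $-T_{Q\wedge\sigma_\lambda}$ is a martingale bounded below, so it is bounded in $L^1$ and converges almost everywhere by the martingale convergence theorem (\cite[Section 4.2]{durrett2005}). On the event $\{\sigma_\lambda=\infty\}=\{\sup_Q T_Q\le\lambda\}$ the stopped and original processes agree, so a union over $\lambda$ shows that $T_Q$ converges almost everywhere on $\{\sup_Q T_Q<\infty\}$; applying the same reasoning to $-T_Q$ gives convergence almost everywhere on $\{\inf_Q T_Q>-\infty\}$. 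Consequently, off a null set, a point outside $\cc$ must satisfy $\sup_Q T_Q=+\infty$ and $\inf_Q T_Q=-\infty$, whence $\limsup_Q T_Q=+\infty$ and $\liminf_Q T_Q=-\infty$; this places it in $\D$ and yields $|\cc\cup\D|=1$.

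The statements (ii) and (iii) are where Lemma \ref{stopping0} is decisive. For (iii), fix $\lambda$ and set $\tau_\lambda=\inf\{Q:\sum_{r=1}^Q U_r^2>\lambda\}$, so that the sets $A_q=\{\tau_\lambda\ge q\}$ are $\M_{q-1}$-measurable. Then $\sum_{q=1}^Q\chi_{A_q}U_q=T_{Q\wedge\tau_\lambda}$, and Lemma \ref{stopping0} gives $\int T_{Q\wedge\tau_\lambda}^2=\int\sum_{q=1}^{Q\wedge\tau_\lambda}U_q^2\le\lambda+B^2$, the bound coming from the definition of $\tau_\lambda$ together with $\|U_q\|_\infty\le B$. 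Thus $T_{Q\wedge\tau_\lambda}$ is bounded in $L^2$ and converges almost everywhere; on $\{\tau_\lambda=\infty\}=\{\sum_q U_q^2\le\lambda\}$ it coincides with $T_Q$, and a union over $\lambda$ shows $T_Q$ converges almost everywhere on $\E$, i.e. $|\E\setminus\cc|=0$. For (ii), I would instead stop at $\rho_\lambda=\inf\{Q:|T_Q|>\lambda\}$, with $B_q=\{\rho_\lambda\ge q\}$ again $\M_{q-1}$-measurable; now $\int\sum_{q=1}^{Q\wedge\rho_\lambda}U_q^2=\int T_{Q\wedge\rho_\lambda}^2\le(\lambda+B)^2$, so by monotone convergence $\sum_{q=1}^{\rho_\lambda}U_q^2<\infty$ almost everywhere. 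On $\{\rho_\lambda=\infty\}=\{\sup_Q|T_Q|\le\lambda\}$ this is the full square function, and since $\cc\subseteq\{\sup_Q|T_Q|<\infty\}$, a union over $\lambda$ gives $|\cc\setminus\E|=0$. The main obstacle throughout is precisely the lack of any a priori global integrability of $T_Q$: everything hinges on constructing bounded stopped martingales and transferring their almost-everywhere behavior back to the correct events, with Lemma \ref{stopping0} performing the crucial conversion of the $L^2$-estimate for a stopped sum into an estimate for the truncated square function.
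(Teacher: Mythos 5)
Your proposal is correct and follows essentially the same route as the paper: parts (ii) and (iii) use exactly the same stopping times ($\eta_L$ and $\tau_L$ in the paper's notation) together with Lemma \ref{stopping0} to convert $L^2$-boundedness of the stopped martingale into finiteness of the truncated square function and vice versa, the only cosmetic difference being your use of monotone convergence where the paper passes through an a.e.\ limit and dominated convergence. For part (i) the paper simply cites the bounded-increments dichotomy from Durrett, whereas you reprove it via one-sided stopping times; that is the standard proof of the cited result, so the content is the same.
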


\begin{proof}  The sequence (\ref{martingale2}) forms a martingale, and by
hypothesis the increments
\begin{equation*}
\sup_{x\in\R/\Z} \bigl|T_q(x) - T_{q-1}(x)\bigr| = \|U_q\|_{\infty}
\end{equation*}
are bounded for $q = 1, 2, \dots $.  Therefore (i) follows from
\cite[Theorem 3.1, Section 4.3]{durrett2005}.

For positive integers $L$ and $Q$ let
\begin{equation*}
A(L,Q) = \{x\in\R/\Z:|T_q(x)| < L\ \text{for}\ q = 1, 2, \dots , Q-1\},
\end{equation*}
so that
\begin{equation*}
A(L,1) = \R/\Z \supseteq A(L,2) \supseteq A(L,3) \supseteq \cdots  .
\end{equation*}
It follows that $A(L,Q)$ is $\M_{Q-1}$-measurable and
\begin{equation*}
A(L,\infty) = \bigcap_{Q=1}^{\infty} A(L,Q) = \{x\in\R/\Z: |T_Q(x)| < L\
\text{for all}\ Q \ge 1\}.
\end{equation*}
Clearly we have
\begin{equation}
\cc \subseteq \bigcup_{L=1}^{\infty} A(L,\infty). \label{sets2}
\end{equation}
Next we define the stopping time $\eta_L:\R/\Z\rar\N\cup{\infty}$ by
\begin{equation*}
\eta_L(x) = \min\{Q: |T_Q(x)| \ge L\},
\end{equation*}
where $\eta_L(x) = \infty$ if $x$ belongs to $A(L,\infty)$.  Then we define
\begin{equation*}
T_Q^{(\eta_L(x))}(x) = \sum_{q=1}^{\min\{\eta_L(x), Q\}} U_q(x) =
\sum_{q=1}^Q \chi_{A(L,q)}(x) U_q(x),
\end{equation*}
so that for each positive integer $L$ the sequence
\begin{equation*}
\big\{(T_Q^{(\eta_L)},\M_Q): Q = 1, 2, \dots \big\}\label{martingale3}
\end{equation*}
forms a martingale, (see \cite[17.6 Corollary 2]{bauer1996}).  And by Lemma \ref{stopping0} we have
\begin{equation}
\int_{\R/\Z} \big\{T_Q^{(\eta_L(x))}(x)\big\}^2\ \dx =
	\int_{\R/\Z} \Big\{\sum_{q=1}^Q
\chi_{A(L,q)}(x)U_q(x)^2\Big\}\ \dx.\label{intid4}
\end{equation}
The inequality (\ref{ineq1}) and the assumption that $\beta\mapsto
c(\beta)h(\beta)$ is bounded on $\Q/\Z$
imply that
\begin{equation}
\sup\{\|U_q\|_{\infty}: q = 1, 2, \dots \} = M < \infty.\label{bound1}
\end{equation}
Thus we have
\begin{equation}
\bigl|T_Q^{(\eta_L(x))}(x)\bigr| \le L + M \label{martingale4}
\end{equation}
uniformly for all $x$ in $\R/\Z$ and all $Q = 1, 2, \dots $.  From the
martingale convergence theorem we conclude that
\begin{equation*}
\lim_{Q\rar\infty} T_Q^{(\eta_L(x))}(x) = V_L(x)
\end{equation*}
exists for almost all points $x$ in $\R/\Z$.  From (\ref{martingale4}) and
the dominated convergence theorem we obtain
\begin{equation*}
\lim_{Q\rar\infty} \int_{\R/\Z} \big\{T_Q^{(\eta_L(x))}(x)\big\}^2\ \dx =
\int_{\R/\Z} V_L(x)^2\ \dx < \infty.
\end{equation*}
Then from (\ref{intid4}) we find that
\begin{equation}
\lim_{Q\rar\infty} \int_{\R/\Z} \big\{T_Q^{(\eta_L(x))}(x)\big\}^2\ \dx
	=  \int_{\R/\Z} \Big\{\sum_{q=1}^{\infty}
\chi_{A(L,q)}(x)U_q(x)^2\Big\}\ \dx < \infty.\label{intid5}
\end{equation}
Now (\ref{intid5}) implies that
\begin{equation*}
\sum_{q=1}^{\infty} \chi_{A(L,q)}(x)U_q(x)^2 < \infty
\end{equation*}
for almost all $x$ in $\R/\Z$.  Hence we have
\begin{equation}
\sum_{q=1}^{\infty} U_q(x)^2 < \infty \label{intid6}
\end{equation}
for almost all $x$ in $A(L,\infty)$.  Then (\ref{sets2}) implies that
(\ref{intid6}) holds for almost all points $x$ in $\cc$.
This proves (ii).

The proof of (iii) is very similar.  For positive integers $L$ and $Q$ let
\begin{equation*}
B(L,Q) = \{x\in\R/\Z:\sum_{q=1}^{Q-1} U_q(x)^2 < L\},
\end{equation*}
so that
\begin{equation*}
B(L,1) = \R/\Z \supseteq B(L,2) \supseteq B(L,3) \supseteq \cdots  .
\end{equation*}
It follows that $B(L,Q)$ is $\M_{Q-1}$-measurable and
\begin{equation*}
B(L,\infty) = \bigcap_{Q=1}^{\infty} B(L,Q) = \{x\in\R/\Z:
\sum_{q=1}^{Q-1} U_q(x)^2 < L\ \text{for all}\ Q \ge 1\}.
\end{equation*}
And we have
\begin{equation}
\E \subseteq \bigcup_{L=1}^{\infty} B(L,\infty). \label{sets3}
\end{equation}
In this case we define a stopping time $\tau_L:\R/\Z\rar\N\cup{\infty}$ by
\begin{equation*}
\tau_L(x) = \min\{Q: \sum_{q=1}^Q U_q(x)^2 \ge L\},
\end{equation*}
where $\tau_L(x) = \infty$ if $x$ belongs to $B(L,\infty)$.  Then we write
\begin{equation*}
T_Q^{(\tau_L(x))}(x) = \sum_{q=1}^{\min\{\tau_L(x), Q\}} U_q(x) =
\sum_{q=1}^Q \chi_{B(L,q)}(x) U_q(x),
\end{equation*}
so that for each positive integer $L$ the sequence
\begin{equation*}
\big\{(T_Q^{(\tau_L)},\M_Q): Q = 1, 2, \dots \big\}\label{martingale5}
\end{equation*}
forms a martingale, (see \cite[17.6 Corollary 2]{bauer1996}).  By Lemma 
\ref{stopping0} we have
\begin{equation}
\int_{\R/\Z} \big\{T_Q^{(\tau_L(x))}(x)\big\}^2\ \dx =
	\int_{\R/\Z} \Big\{\sum_{q=1}^Q
\chi_{B(L,q)}(x)U_q(x)^2\Big\}\ \dx.\label{intid7}
\end{equation}
The bound (\ref{bound1}) implies that
\begin{equation*}
\sum_{q=1}^Q \chi_{B(L,q)}(x)U_q(x)^2 \le L + M^2
\label{martingale6}
\end{equation*}
uniformly for all $x$ in $\R/\Z$ and $Q = 1, 2, \dots $.  It follows from
(\ref{intid7}) and the martingale convergence
theorem that
\begin{equation*}
\lim_{Q\rar\infty} T_Q^{(\tau_L(x))}(x) = W_L(x)
\end{equation*}
exists and is finite for almost all points $x$ in $\R/\Z$.  Hence the limit
\begin{equation}
\lim_{Q\rar\infty} T_Q(x)\label{martingale7}
\end{equation}
exists and is finite for almost all points $x$ in $B(L,\infty)$.  Now
(\ref{sets3}) implies that the limit
(\ref{martingale7}) exists and is finite for almost all points $x$ in
$\E$.  This proves (iii).
\end{proof}

The conclusions (ii) and (iii) in the statement of Theorem \ref{martthm2}
are essentially the same as those obtained by
Gundy \cite[Theorem 3.1]{gundy1966} under somewhat different hypotheses.
In particular, Gundy works
with an $H$-system and a {\it regular} sequence of $\sigma$-algebras.  The
sequence of $\sigma$-algebras
$\M_Q$, $Q = 1, 2, \dots $, is not regular in Gundy's sense, but we are
able to establish the same type of result
by using instead the hypothesis that $\beta\mapsto c(\beta)h(\beta)$ is
bounded on $\Q/\Z$.

Finally, the series that occur in Theorem \ref{martthm2} can be rewritten
using the continued fraction interpretation of
the functions $f_{\beta}$.  In this way we arrive at the statement of
Theorem \ref{martthm0}.



\end{document}